\documentclass[12pt,a4paper,oneside]{article}
\usepackage{amsopn}
\usepackage{amsfonts,url}
\usepackage{color}
\usepackage[margin=23mm]{geometry}
\usepackage{amssymb, amsmath, amsthm, amscd, accents, mathtools, mathrsfs}
\usepackage{dsfont}
\usepackage{graphicx,times,bbm,url}
\usepackage[T1]{fontenc}
\usepackage{hyperref}
\usepackage{comment}
\usepackage{tabularx}
\usepackage{array}
\usepackage{enumitem}
\usepackage{float}

\title{Asymptotics of transition densities for L\'evy processes with stochastic resetting}

\author{Kacper Budnik, Tomasz Grzywny, Pawe{\l} Sztonyk\\
Faculty of Pure and Applied Mathematics,\\
Wroc\l aw University of Science and Technology,\\
Wyb. Wyspia\'nskiego 27, 50-370 Wroc\l aw, Poland}

\date{\today}

\newtheorem{lemma}{Lemma}
\numberwithin{lemma}{section}
\newtheorem{theorem}[lemma]{Theorem}
\newtheorem{corollary}[lemma]{Corollary}

\theoremstyle{definition}
\newtheorem{remark}[lemma]{Remark}

\newcommand{\dx}[1][x]{\text{d}#1}

\newcommand*\mP{\mathbb{P}}
\newcommand*\mE{\mathbb{E}}
\newcommand{\Rd}{\mathbb{R}^d}
\newcommand{\R}{\mathbb{R}}

\numberwithin{equation}{section}

\begin{document}
\maketitle

\footnotetext{\emph{2000 Mathematics Subject Classification:} Primary: 60G51, 60J35
Secondary: 60K05, 60J25. The study has been supported by Grant No.~2023/49/B/ST1/00678 of the National Science Center (Poland). keywords: Stochastic processes, Stochastic Resetting, L\'evy processes, Asymptotics, Probability Density Functions, Probability, Renewal Theory, Stochastic Analysis, Transition Densities
}

\emph{e-mail address: kacper.budnik@pwr.edu.pl; tomasz.grzywny@pwr.edu.pl; pawel.sztonyk@pwr.edu.pl}

\begin{abstract}
    We derive an explicit representation for the distribution of a Lévy process with total stochastic resetting and use it to investigate the asymptotic behaviour of the corresponding transition densities. The proposed approach applies to several important classes of Lévy processes, including relativistic stable processes, and yields new asymptotic estimates for their transition densities.
\end{abstract}

\counterwithout{equation}{section}
\numberwithin{equation}{section}
\numberwithin{figure}{section}

%\tableofcontents

\section{Introduction and preliminaries}
Stochastic resetting has recently become an active area of research due to its broad range of applications in physics, biology, computer science and finance. In its simplest form, the underlying process is restarted at random times, typically governed by a Poisson process. Such resetting mechanisms substantially modify the long-time behaviour of the original process and often produce stationary distributions even when the underlying process is transient.

While stochastic resetting has been extensively investigated for Brownian motion and diffusion processes, much less is known for general Lévy processes, especially concerning precise asymptotic behaviour of their transition densities. The main objective of this paper is to establish such asymptotic results for a broad class of Lévy processes under total resetting.

Throughout this paper we study the stochastic process $\boldsymbol{X}$ obtained from a L\'evy process $\boldsymbol{Y}$ in $\Rd$ by (total) resetting, i.e., $\boldsymbol{Y}$ is reset to the origin at exponential random times. 

Both the present model and the more general framework of partial resetting have attracted considerable attention in recent years (see \cite{Grzywny2024, kolb} or \cite{Evans2020,Gupta2022} for surveys).
They arise naturally in applications to physics (see, e.g., \cite{EvansMajumdar2011}), biology (\cite{RoldanLisicaGrill2016}), computer science (\cite{Dumas2002,MontanariZecchina2002,Guillemin2002}) and finance (\cite{Kou2002,Mendoza2010,MERTON1976}).

Let $d\in \{ 1,2,...\}$ and $(\Omega, \mathcal{F}, \mP)$ be a probability space. We consider a L\'evy process $\boldsymbol{Y} = (Y_t, \Omega, \mathcal{F}, \mP)$ on $\Rd$, i.e., a stochastic process with 
stationary and independent increments, continuous in probability and determined by its characteristic function
\[
  \mE e^{i \langle \xi, Y_t \rangle} = e^{-t\psi(\xi)}, \quad x \in\Rd, t\geq 0,
\]
where $\psi$ is given by the L\'evy--Khintchine formula
\[
  \psi(\xi) = -i\langle \gamma,\xi\rangle + \frac12 \langle \xi, A\xi\rangle
      +\int_{\Rd} \left(1 - e^{i\langle\xi,y\rangle}+i\langle \xi,y\rangle1_{(0,1)}(|y|)\right)\, \nu(\dx[y]), \quad \xi\in\Rd.
\]
Here $\gamma\in\Rd,$ $A$ is a symmetric nonnegative--definite $d\times d$ matrix and $\nu$ is
a measure on $\Rd\setminus\{ 0 \}$ satisfying $\int_{\Rd\setminus\{ 0 \}} (|y|^2\wedge 1)\, \nu(\dx[y]) < \infty$ (the \emph{L{\'e}vy measure} of $\boldsymbol{Y}$).

Let $\boldsymbol{N}$ be a Poisson process with intensity $\lambda>0,$ 
independent of $\boldsymbol{Y}.$ The process with resetting $\boldsymbol{X}$ corresponding to $\boldsymbol{Y}$ is defined as
     \begin{equation*}\label{eq:def_procX}
        X_t=\left\{\begin{array}{lll}
            Y_t             &\text{for } & t<T_1, \\
            Y_t-Y_{T_n}     &\text{for } & T_n\leq t < T_{n+1},
        \end{array}\right.
    \end{equation*}
where $\left\{T_i, i\in\mathbb{N}_0\right\}$ %of $\boldsymbol{N}$ 
are the Poisson arrival moments of $\boldsymbol{N}$.  
Observe that, for every $\omega\in\Omega$ a path $t\mapsto X_t(\omega)$ is constructed solely from the trajectory  $t\mapsto Y_t(\omega)$. This is possible because we have limited ourselves to L\'{e}vy processes.

In the present paper, we focus on the asymptotic behavior of densities of the processes
with total resetting as both time and space variables tend to infinity. Our starting point is an explicit representation formula for the one-dimensional distributions of the reset process. This representation, established in Theorem \ref{th:formula}, serves as the main tool for deriving the asymptotic behaviour of the corresponding transition densities. The resulting one-dimensional distributions agree with that used in the literature, which is derived from the recursive formula. Using this, we derive the asymptotics of its density for several important examples of L\'evy processes. A particularly interesting example is provided by relativistic stable processes (see Theorem \ref{th:relatproc}). Let us mention that recently an analogous formula for the density was obtained in \cite{Grzywny2024} (see also \cite{dibello2025}) for stable processes with partial resetting. Restricting ourselves to total resetting allows us to obtain substantially more general asymptotic results.

For $\boldsymbol{X}$, $\boldsymbol{Y}$, $\boldsymbol{N}$ defined as above, we derive the following formula for the one--dimensional distributions of $\boldsymbol{X}.$
\begin{theorem}\label{th:formula} For all $\lambda>0, t>0$, $x\in\Rd$ and Borel sets $A\subset\Rd$ we have
  \begin{equation}\label{eq:res_distrib}
    \mP^x(X_t\in A) = e^{-\lambda t} \mP^x(Y_t\in A)  + \int_0^t \lambda e^{-\lambda s} \mP^0(Y_{s}\in A)\, \dx[s].
  \end{equation}
  In particular, if for every $t>0$ the random variable $Y_t$ is  absolutely continuous with respect to the Lebesgue measure with a corresponding density $p(t,\cdot)$ then $X_t$ is also absolutely continuous for every $t>0$ with a density $p_\lambda(t,x,y)$ given by the formula
  \begin{equation}\label{eq:res_den_nores}
        p_\lambda(t,x,y)=e^{-\lambda t} p(t,y-x) + \int\limits_{0}^t\lambda e^{-\lambda  s}p(s,y)\,\dx[s], \quad t>0, x,\,y\in \Rd.
    \end{equation}
\end{theorem}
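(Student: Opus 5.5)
The plan is to condition on the number of resets up to time $t$ and on the time $T_{N_t}$ of the last reset. Under $\mP^x$ the process $\boldsymbol{Y}$ starts at $x$, and before the first reset $X_t = Y_t$; I interpret each reset as returning the process to the origin, so that $X_t = Y_t - Y_{T_n}$ is a fresh copy of the process started at $0$. I split the probability along the events $\{N_t = 0\} = \{T_1 > t\}$ and $\{N_t \ge 1\}$.

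On $\{N_t=0\}$ we have $X_t=Y_t$. By independence of $\boldsymbol{N}$ and $\boldsymbol{Y}$,
\[
\mP^x(X_t\in A,\, N_t=0)=e^{-\lambda t}\,\mP^x(Y_t\in A),
\]
which is the first term of \eqref{eq:res_distrib}.

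On $\{N_t\ge1\}$ let $L_t=T_{N_t}$ be the last arrival before $t$. Then $X_t = Y_t - Y_{L_t}$, and I will use the following facts.

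\begin{enumerate}
\item $L_t$ is $\sigma(\boldsymbol{N})$-measurable, hence independent of $\boldsymbol{Y}$.
\item Conditionally on $L_t = u$, the increment $Y_t - Y_u$ has the law of $Y_{t-u}$ under $\mP^0$. This holds by stationarity of increments, and it does not depend on the starting point $x$.
\item The law of $L_t$ on $\{N_t\ge1\}$ is computed from the renewal structure:
\[
\mP(L_t\in du,\, N_t\ge1)=\sum_{n\ge1}\mP(T_n\in du,\; T_{n+1}-T_n>t-u)
=\sum_{n\ge1} \frac{\lambda^n u^{n-1}}{(n-1)!}e^{-\lambda u}e^{-\lambda(t-u)}\,du
=\lambda e^{-\lambda(t-u)}\,du,
\]
for $u\in(0,t)$. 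Equivalently, by time reversal of the Poisson process, $t-L_t$ is exponential with parameter $\lambda$ truncated at $t$.
\end{enumerate}

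To make the conditioning rigorous, I write
\[
\mP^x(X_t\in A,\, N_t\ge1)=\sum_{n\ge1}\mE\bigl[\mathbf 1_{\{T_n\le t<T_{n+1}\}}\,\mathbf 1_A(Y_t-Y_{T_n})\bigr].
\]
I apply Fubini to the independent pair $(\boldsymbol{N},\boldsymbol{Y})$, integrating over the joint density of $(T_n, T_{n+1}-T_n)$. This gives
\[
\int_0^t \lambda e^{-\lambda(t-u)}\,\mP^0(Y_{t-u}\in A)\,du .
\]
Substituting $s=t-u$ yields the second term of \eqref{eq:res_distrib}. The step that needs care is the joint measurability of $(u,\omega)\mapsto \mathbf 1_A(Y_t(\omega)-Y_u(\omega))$, which holds because $\boldsymbol{Y}$ has a càdlàg modification. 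The conditional-law identity then follows from the independence of $\boldsymbol{N}$ and $\boldsymbol{Y}$ together with stationarity of increments, since $T_n$ is independent of $\boldsymbol{Y}$. This is the main technical point; the rest is computation.

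For the density statement, I write $\mP^x(Y_t\in A)=\int_A p(t,y-x)\,dy$ and $\mP^0(Y_s\in A)=\int_A p(s,y)\,dy$. Tonelli's theorem applies since everything is nonnegative, and exchanging the integrals in $s$ and $y$ exhibits $\mP^x(X_t\in A)$ as $\int_A p_\lambda(t,x,y)\,dy$ with $p_\lambda$ given by \eqref{eq:res_den_nores}. Here $p_\lambda(t,x,\cdot)$ is integrable, with total mass $1$, so it is finite almost everywhere. This gives \eqref{eq:res_den_nores} and shows that $X_t$ is absolutely continuous.
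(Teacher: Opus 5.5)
Your proof is correct, but it follows a different route from the paper. The paper starts from the first-reset renewal equation \eqref{eq:rec} (taken from \cite{Grzywny2024}), iterates it, collapses the iterated integrals with Cauchy's formula, removes the remainder via the bound $\lambda^{n+1}t^{n+1}/n!$, and sums an exponential series.

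You decompose instead at the \emph{last} reset $L_t$ before $t$. Under total resetting everything before $L_t$ is forgotten, so the renewal equation is solved in one step: only the law of $L_t$ and stationarity of increments matter. The two computations coincide term by term. After the substitution $s=t-u$, your $n$-th summand is exactly the paper's $F_n(t,A)$, so your argument identifies $F_n(t,A)=\mP^x(X_t\in A,\,N_t=n)$, which the paper's iteration produces only analytically.

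What your route buys: it is self-contained and needs no remainder estimate. It also makes explicit the joint measurability behind Fubini, which the paper leaves implicit. It would extend directly to non-Poisson renewal resetting, by replacing $\lambda e^{-\lambda(t-u)}\,\dx[u]$ with the law of the last renewal time.

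What the paper's route buys: it uses only the first-reset structure encoded in the recursion. That is the form in which the formula appears in the resetting literature, which the authors explicitly want to match. It is also the mechanism that still applies for partial resetting. There the post-reset state depends on the past, so a last-reset decomposition does not give a closed form.
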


\begin{proof}
Proceeding in the same way as in the proof of \cite[(3.1)]{Grzywny2024}, we obtain for all $t>0$ and Borel sets $A\subset\Rd$ the formula
\begin{equation}\label{eq:rec}
    \begin{split}
\mP^x(X_t\in A) &= e^{-\lambda t} \mP^x(Y_t\in A)  + \int_0^t \lambda e^{-\lambda s} \mP^0(X_{t-s}\in A)\, \dx[s]\\
        &  = e^{-\lambda t} \mP^x(Y_t\in A)  + e^{-\lambda t} \int_0^t \lambda e^{\lambda s} \mP^0(X_{s}\in A)\, \dx[s].
    \end{split}
\end{equation}
Iterating \eqref{eq:rec} we obtain
\begin{align*}
    \mP^x(X_t\in A) & = 
    e^{-\lambda t} \mP^x(Y_t\in A)  + \lambda e^{-\lambda t} \int_0^t \mP^0(Y_s \in A)\, \dx[s] + \lambda^2 e^{-\lambda t} \int_0^t \int_0^s e^{\lambda u} \mP^0(X_u \in A) \dx[u] \dx[s]\\
    & = e^{-\lambda t} \mP^x(Y_t\in A)  + \sum_{k=1}^n F_k(t, A) + R_{n+1}(t,A),
\end{align*}
where 
\[
    F_k(t,A) = \lambda^k e^{-\lambda t} \idotsint_{0<t_1<t_2<\ldots<t_k<t} \mP^0(Y_{t_1}\in A)\, \dx[t_1]...\dx[t_k]
\]
and
\[
    R_{n+1}(t,A) = \lambda^{n+1} e^{-\lambda t} \idotsint_{0<t_1<t_2<\ldots<t_{n+1}<t} e^{\lambda t_1}\mP^0(X_{t_1}\in A)\, \dx[t_1]...\dx[t_{n+1}].
\]
Using Cauchy's formula for repeated integration we get
\[
   F_k(t,A) = e^{-\lambda t} \frac{\lambda^k}{(k-1)!} \int_0^t (t-s)^{k-1} \mP^0(Y_{s}\in A) \dx[s]
\]
and
\[
   R_{n+1}(t,A) =  e^{-\lambda t} \frac{\lambda^{n+1}}{n!} \int_0^t (t-s)^{n} e^{\lambda s} \mP^0(X_{s}\in A)\, \dx[s]
   \leq  \frac{\lambda^{n+1} t^{n+1}}{n!},
\]
hence $\lim_{n\to\infty} R_{n+1}(t,A) = 0$ and
\begin{align*}
    \mP^x(X_t\in A) & = e^{-\lambda t} \mP^x(Y_t\in A)  + e^{-\lambda t} \sum_{n=1}^\infty \frac{\lambda^n}{(n-1)!} \int_0^t (t-s)^{n-1}\mP^0(Y_s\in A)\, \dx[s] \\
    & = e^{-\lambda t} \mP^x(Y_t\in A)  + \lambda \int_0^t e^{-\lambda s} \mP^0(Y_s\in A)\, \dx[s]
\end{align*}
which is \eqref{eq:res_distrib}. The formula \eqref{eq:res_den_nores} is an obvious conclusion.
\end{proof}
In the same way as in the proof of [11, (3.1)] one can prove that $\boldsymbol{X}$ with respect to natural filtration is a Markov process with c\'adl\'ag paths.  However,  \eqref{eq:res_distrib} shows that the process with resetting is not a Feller process. Furthermore, the transition operators $P_tf(x)=\mE^x f(X_t)$ cannot be defined on $L^p$ space for any $p\in[1,\infty)$.

\begin{remark}An immediate consequence of the above theorem is the fact  that $X_t$ converges in distribution to $X_\infty\sim \lambda U_\lambda$, where $U_\lambda$ is $\lambda$-potential measure of $\boldsymbol{Y}$, that is $U_\lambda(A)=\int^\infty_0e^{-
\lambda s}\mP^0(X_s\in A)\dx[s]$. Furthermore, if the distribution of $Y_t$ is absolutely continuous for every $t>0$ and $Y_{t_0}$ has bounded density for some $t_0>0$, \begin{equation*}\label{eq:sup_ergodic}\lim_{t\to\infty}\sup_{x,y\in\Rd}|p_\lambda(t,x,y)-\lambda u_\lambda(y)|=0,\end{equation*}
where $u_\lambda$ is a density of $U_\lambda$. Indeed, for $t\geq t_0$ we have $\sup_{z\in\Rd}p(t,z)\leq \sup_{z\in\Rd}p(t_0,z)=:c$ and therefore
$$\sup_{x,y\in\Rd}|p_\lambda(t,x,y)-\lambda u_\lambda(y)|\leq e^{-\lambda t}c+\lambda\int^\infty_te^{-\lambda s}\,c\,\dx[s].$$
Let $A\subset \Rd$. If we additionally assume that $\inf_{y\in A}u_\lambda(y)>0$, then we obtain also 
\begin{equation*}%\label{eq:sup_ergodic}
\lim_{t\to\infty}\sup_{x\in\Rd,\,y\in A}\left|\frac{p_\lambda(t,x,y)}{\lambda u_\lambda(y)}-1\right|=0.\end{equation*}
\end{remark}
The above assumptions are usually satisfied for compact sets $A$, therefore we will examine the behaviour of density $p_\lambda$ for large $t$ and $|y|$.

In what follows, we consider several examples of L\'evy processes and, 
using Theorem \ref{th:formula}, obtain the asymptotic behavior 
of the corresponding resetting densities. Below we use the
notation $A(t,y)\sim B(t,y)$ to indicate that $\lim\limits_{\substack{|y|\to\infty\\ t\to \infty}}\frac{A(t,y)}{B(t,y)}=1.$ 

First of all, we consider the example of Brownian motion $Y_t=B_t$ in $\Rd$ with
the characteristic exponent $\psi(\xi)=\frac12 |\xi|^2$ and the densities
$p(t,y) = (2\pi t)^{-d/2}e^{-\frac{|y|^2}{2t}}, t>0, y\in\Rd.$ The density of
the corresponding process with resetting is for every $\lambda>0$ and $t>0$, $x,\,y\in\Rd$, given by
\begin{align*}
  p_\lambda(t,x,y) & = e^{-\lambda t} (2\pi t)^{-d/2}e^{-\frac{|y-x|^2}{2t}} +
    \int_0^t \lambda e^{-\lambda s} (2\pi s)^{-d/2}e^{-\frac{|y|^2}{2s}} \, \dx[s], \\
    & = (2\pi t)^{-d/2}e^{-\frac{|y-x|^2}{2t}-\lambda t} + 
    \lambda (2\pi)^{-d/2}|y|^{1-\frac{d}{2}} \int_0^{t/|y|} u^{-d/2}e^{-|y|\left(\lambda u + \frac{1}{2u}\right)}\, \dx[u].
\end{align*}
With little effort, we obtain the following description of its asymptotics. It depends on the ratio $\frac{t}{|y|}$ and
changes its character on the threshold $\frac{t}{|y|}=\frac{1}{\sqrt{2\lambda}}.$ For $\frac{t}{|y|}\geq M > \frac{1}{\sqrt{2\lambda}}$ the obtained asymptotic coincides with 
the resolvent kernel ($\lambda$-potential) $u_\lambda(y) = \frac{1}{\pi^{d/2}}\left(\frac{\sqrt{2\lambda}}{2|y|}\right)^{\frac{d}{2}-1}K_{\frac{d}{2}-1}(\sqrt{2\lambda}\,|y|)$ (see \cite[Section 7.3]{SchillingPartzsch2014}, here $K_\mu$ denotes the modified Bessel function of the second kind). 
\begin{theorem}\label{th:Wiener}
    Let $\boldsymbol{Y}$ be the Brownian motion in $\Rd.$ Then
    for all $\lambda>0, t>0$ the density $p_\lambda$ of the
    corresponding process $\boldsymbol{X}$ with resetting exists and, for any $x\in\Rd$,
    \begin{enumerate}
        \item For every $M>\frac{1}{\sqrt{2\lambda}}$ we have
        \begin{equation}\label{eq:WienerFirst}
          p_\lambda(t,x,y) \sim \frac{\lambda^{\frac{d+1}{4}}}{2^{\frac{d+1}{4}}\pi^{\frac{d-1}{2}}} \frac{e^{-\sqrt{2\lambda}|y|}}{|y|^{\frac{d-1}{2}}}, \quad \text{ on } \quad \frac{t}{|y|}\geq M.
        \end{equation}
        \item For every $0<  M <\frac{1}{\sqrt{2\lambda}}$ we have
        \begin{equation}\label{eq:WienerSecond}
            p_\lambda(t,x,y) \sim e^{-\lambda t}p(t,y) \left(\frac{2\lambda}{(|y|/t)^2-2\lambda}+e^{\frac{-|x|^2+2\langle x,y
            \rangle}{2t}}\right) %=  \frac{(2\pi t)^{-d/2}e^{-\frac{|x|^2}{2t}-\lambda t}(|x|/t)^2}{(|x|/t)^2-2\lambda}
            , \quad \text{ on } \quad 0< \frac{t}{|y|} \leq M.
        \end{equation}
        \item For $\frac{t}{|y|}=\frac{1}{\sqrt{2\lambda}}$ we get
        \[
             p_\lambda(t,x,y) \sim \frac{\lambda^{\frac{d+1}{4}}}{2\cdot 2^{\frac{d+1}{4}}\pi^{\frac{d-1}{2}}} \frac{e^{-\sqrt{2\lambda}|y|}}{|y|^{\frac{d-1}{2}}}.
        \]
    \end{enumerate}
\end{theorem}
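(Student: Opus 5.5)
We start from the explicit representation \eqref{eq:res_den_nores} of Theorem~\ref{th:formula}, already rewritten above after the substitution $s=|y|u$:
\[
p_\lambda(t,x,y)=I_1+I_2,\qquad I_1=(2\pi t)^{-d/2}e^{-\frac{|y-x|^2}{2t}-\lambda t},\qquad I_2=\lambda(2\pi)^{-d/2}|y|^{1-\frac d2}\int_0^{t/|y|}u^{-d/2}e^{-|y|\phi(u)}\,\dx[u],
\]
where $\phi(u)=\lambda u+\frac1{2u}$. The function $\phi$ is strictly convex on $(0,\infty)$ with unique minimum at $u_0=\frac1{\sqrt{2\lambda}}$, where $\phi(u_0)=\sqrt{2\lambda}$ and $\phi''(u_0)=u_0^{-3}=(2\lambda)^{3/2}$. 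The whole proof is Laplace's method for $I_2$ with large parameter $|y|$, combined with a comparison of $I_1$ against $I_2$. Note that $I_1=e^{-\lambda t}p(t,y)\,e^{\frac{-|x|^2+2\langle x,y\rangle}{2t}}$ exactly, and $I_1=(2\pi t)^{-d/2}\exp(-|y|\phi(t/|y|)+O(1+|y|/t))$ with $x$ fixed.

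For case 1, $t/|y|\ge M>u_0$, so the interior minimum $u_0$ lies inside $[0,t/|y|]$ at a fixed positive distance from the endpoint. Laplace's method then gives
\[
I_2\sim\lambda(2\pi)^{-d/2}|y|^{1-\frac d2}u_0^{-d/2}e^{-\sqrt{2\lambda}|y|}\sqrt{\frac{2\pi}{|y|\phi''(u_0)}}.
\]
Simplifying with $u_0^{-d/2}=(2\lambda)^{d/4}$ and $\phi''(u_0)^{-1/2}=(2\lambda)^{-3/4}$ yields exactly the constant in \eqref{eq:WienerFirst}. We need uniformity in $t$, since the upper limit may tend to infinity. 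The tail $\int_{2u_0}^\infty$ is handled by the bound $\phi(u)\ge\sqrt{2\lambda}+c\min(u-u_0,1)$ together with integrability of $u^{-d/2}$ at infinity when $d\ge 3$; for $d\le2$ we use $e^{-|y|\lambda u/2}$ instead. Both bounds give something exponentially smaller. The term $I_1$ is negligible because $\phi(t/|y|)\ge\phi(M)>\sqrt{2\lambda}$, so $I_1\le Ce^{-|y|\phi(M)+O(1)}$ (using $t\to\infty$ and $|y|/t\le 1/M$), which is exponentially smaller than $e^{-\sqrt{2\lambda}|y|}$.

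For case 2, $v:=t/|y|\le M<u_0$, so on $[0,v]$ the phase $\phi$ is decreasing and its minimum is at the endpoint $v$, where $-\phi'(v)=\frac1{2v^2}-\lambda\ge\frac1{2M^2}-\lambda>0$. The boundary version of Laplace's method (one integration by parts, or the substitution $w=\phi(u)$) gives
\[
I_2\sim\lambda(2\pi)^{-d/2}|y|^{-d/2}v^{-d/2}\frac{e^{-|y|\phi(v)}}{\frac1{2v^2}-\lambda}=\lambda(2\pi t)^{-d/2}e^{-\lambda t-\frac{|y|^2}{2t}}\frac{2v^2}{1-2\lambda v^2},
\]
and this equals $e^{-\lambda t}p(t,y)\frac{2\lambda}{(|y|/t)^2-2\lambda}$. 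The main obstacle is uniformity, because $v$ ranges over $(0,M]$ and may tend to $0$. I would use the change of variables $w=|y|(\phi(u)-\phi(v))$ and note that $u\mapsto 1/\phi'(u)$ and $u^{-d/2}$ vary slowly on the relevant scale $u\in[v-O(v^2/|y|),v]$; this holds because $|y|\,|\phi'(v)|\approx|y|/(2v^2)\to\infty$ uniformly, as $|y|/v^2\ge|y|/M^2$. The relative error should then be $O(v/|y|)+O(1/|y|)$ uniformly. Finally, adding $I_1$ in its exact form gives \eqref{eq:WienerSecond}; no comparison is needed there, since both terms are of the same order.

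For case 3, $t/|y|=u_0$ exactly, so the minimum sits at the endpoint with $\phi'(u_0)=0$. Only half of the Gaussian integral is captured, which gives one half of the case 1 constant. It remains to check that $I_1$ is negligible. Here $I_1\approx(2\pi t)^{-d/2}e^{-\sqrt{2\lambda}|y|+O(1)}$ decays like $|y|^{-d/2}$, while $I_2$ decays like $|y|^{-(d-1)/2}$, so their ratio is $O(|y|^{-1/2})\to0$. The factor $e^{\langle x,y\rangle/t}$ is bounded because $|y|/t$ is fixed.
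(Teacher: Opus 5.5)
Your proposal is correct and follows essentially the paper's route. You use the representation \eqref{eq:res_den_nores}, then interior Laplace in case 1 and a half-Gaussian endpoint Laplace in case 3. In case 2 you use the substitution $w=\phi(u)-\phi(t/|y|)$, which is exactly the paper's $h(w+f(\kappa))$ expansion, with the bound $|h'(u)|\le c\,h(u)/u$ giving a uniform $O(1/|y|)$ relative error. You are more explicit than the paper about the negligibility of $e^{-\lambda t}p(t,y-x)$ in cases 1 and 3. Your case-2 sketch should add a polynomial domination of the Jacobian factor for large $w$, where $u(w)\to 0$; the paper obtains this from the same bound on $h'$.
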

We prove Theorem \ref{th:Wiener} in Section \ref{section:Wiener}. We also highlight here a particularly interesting and challenging example of the relativistic stable processes, i.e., the L\'evy process
    with the characteristic exponent:
    \[
       \psi(\xi) = \left(m^{2/\alpha} + |\xi|^2\right)^{\alpha/2} - m, \quad \xi\in\Rd,
    \] 
        where $\alpha\in(0,2)$, $m > 0$. They form a~significant class of processes corresponding to the so-called relativistic stable operators
\[
    L = m - \left(m^{2/\alpha} - \Delta\right)^{\alpha/2}.
\]
 Especially for $\alpha=1,$ this operator plays a very important role in relativistic quantum mechanics as it corresponds to
the kinetic energy of a relativistic particle with mass $m$ (see, e.g., \cite{Carmona1989,Froehlich2007,Herr2014,Lieb2010} and the references therein).

Let $\eta:\: (0,\infty) \to (0,\infty)$ be the density of the $\alpha/2$--stable subordinator, i.e., $\int_0^\infty e^{-su} \eta(t,u)\, \dx[u] = e^{-ts^{\alpha/2}}, \, s>0.$

\begin{theorem}\label{th:relatproc}
    Let $\alpha\in (0,2), m>0$ and $\boldsymbol{Y}$ be the relativistic stable process.
    Then the density $p_\lambda$ of the corresponding process $\boldsymbol{X}$ with resetting exists for all $\lambda>0, t>0$ and,  for any $x\in\Rd$,:
    \begin{enumerate}
        \item If $\lambda > m $ then
        \begin{align*}
          p_\lambda(t,x,y) 
          &\sim  \frac{\lambda\alpha m^{\frac{d+\alpha-1}{2\alpha}}}{2^{\frac{d-\alpha+1}{2}}\pi^{\frac{d-1}{2}}\Gamma(1-\frac{\alpha}{2})(\lambda-m)^2} \frac{e^{-m^{1/\alpha}|y|}}{|y|^{\frac{d+\alpha+1}{2}}} .
    \end{align*}
    \item If $\lambda = m$  then
        \begin{equation*}
        p_\lambda(t,x,y) \sim \frac{m^{\frac{d+\alpha-1}{2\alpha}}e^{-m^{1/\alpha}|y|}}{2^{\frac{d+\alpha-1}{2}}\pi^{\frac{d-1}{2}} |y|^{\frac{d-\alpha+1}{2}}}
        \int_0^{\frac{\sqrt{2^\alpha m}}{|y|^{\alpha/2}}t}\eta(s,1)\, \dx[s].
    \end{equation*}
    
    \item If $\lambda < m$ and $\frac{t}{|x|^{\alpha/2}}<M$ for some constant $M>0$ then
       \begin{equation*}
        \begin{split}
            p_\lambda(t,x,y) & \sim 
            \left(\lambda + e^{m^{1/\alpha}\langle \frac{y}{|y|},x\rangle }\frac{\eta\left(\frac{t}{|y-x|^{\alpha/2}}\sqrt{2^\alpha m},1\right) (m-\lambda)}{\eta\left(\frac{t}{|y|^{\alpha/2}}\sqrt{2^\alpha m},1\right)}\right) \\
            &\times \frac{ m^{\frac{d-\alpha-1}{2\alpha}}e^{-m^{1/\alpha}|y|}}{2^{\frac{d+\alpha-1}{2}}\pi^{\frac{d-1}{2}}|y|^{\frac{d-\alpha+1}{2}}}
        \int_0^{\frac{\sqrt{2^\alpha m}}{|y|^{\alpha/2}} t} e^{(m-\lambda)\frac{|y|^{\alpha/2}}{\sqrt{2^\alpha m}}s} \eta(s,1)\, \dx[s].
        \end{split}
    \end{equation*}
    \end{enumerate}
    
\end{theorem}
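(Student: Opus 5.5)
The plan is to start from formula \eqref{eq:res_den_nores} of Theorem \ref{th:formula},
\[
p_\lambda(t,x,y)=e^{-\lambda t}p(t,y-x)+\lambda\int_0^t e^{-\lambda s}p(s,y)\,\dx[s],
\]
and to represent the relativistic stable density by subordination. Since $\psi(\xi)+m = (m^{2/\alpha}+|\xi|^2)^{\alpha/2}$, the process $Y_t$ has the law of $e^{mt}$ times a Brownian motion with killing rate $m^{2/\alpha}$, evaluated at the $\alpha/2$-stable subordinator time with the right scaling. Concretely, I would use
\[
p(s,y)=e^{ms}\int_0^\infty (4\pi u)^{-d/2}e^{-\frac{|y|^2}{4u}-m^{2/\alpha}u}\,\eta(s,u)\,\dx[u].
\]
By the scaling $\eta(s,u)=s^{-2/\alpha}\eta(1,us^{-2/\alpha})$, and equivalently $\eta(s,u)=u^{-1}\,\phi(s u^{-\alpha/2})$ with $\phi(r)=r^{-2/\alpha}\eta(1,r^{-2/\alpha})$, the density $\eta(\cdot,1)$ appears naturally.

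Next I apply Laplace's method in $u$ for large $|y|$. The Gaussian part $e^{-|y|^2/(4u)-m^{2/\alpha}u}$ is maximised at $u_0=|y|/(2m^{1/\alpha})$, with peak value $e^{-m^{1/\alpha}|y|}$ and width of order $|y|^{1/2}$. Freezing $\eta(s,u)$ at $u_0$ gives
\[
p(s,y)\approx e^{ms}\,(4\pi u_0)^{-d/2}\sqrt{4\pi u_0/m^{2/\alpha}}\;e^{-m^{1/\alpha}|y|}\,\eta(s,u_0),
\]
uniformly in $s$ over the relevant range. Here $\eta(s,u_0)=u_0^{-1}\phi(s u_0^{-\alpha/2})$, and $u_0^{\alpha/2}=|y|^{\alpha/2}/\sqrt{2^\alpha m}$, which explains the variable $\sqrt{2^\alpha m}\,t/|y|^{\alpha/2}$ in the statement. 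The constant $2^{-(d+\alpha-1)/2}\pi^{-(d-1)/2}m^{(d+\alpha-1)/(2\alpha)}|y|^{-(d-\alpha+1)/2}$ should come out of this bookkeeping. Justifying the uniformity needs continuity of $\eta$ in its second variable at relative scale $|y|^{-1/2}$; I would use smoothness and scaling of the stable subordinator density for this.

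Substituting into the time integral with $r=s/u_0^{\alpha/2}$ gives an integral
\[
\int_0^{t/u_0^{\alpha/2}} e^{(m-\lambda)u_0^{\alpha/2}r}\,\eta(r,1)\,\dx[r]
\]
up to the explicit factor, and the three cases split as follows.

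\textbf{Case $\lambda<m$.} The weight grows, so this form is kept as stated. The term $e^{-\lambda t}p(t,y-x)$ is handled by the same Laplace asymptotic with $y-x$ in place of $y$. Using $|y-x|=|y|-\langle y/|y|,x\rangle+o(1)$ produces the factor $e^{m^{1/\alpha}\langle y/|y|,x\rangle}$ and the ratio of $\eta$ values. The prefactor $(m-\lambda)$ versus $\lambda$ would come from comparing the boundary term with the integral, presumably via an integration by parts in $s$.

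\textbf{Case $\lambda=m$.} The weight is $1$ and the first term is of lower order, giving exactly the stated integral of $\eta(s,1)$.

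\textbf{Case $\lambda>m$.} The weight $e^{-(\lambda-m)u_0^{\alpha/2}r}$ concentrates at $r\to0$. There I use the small-time behaviour $\eta(r,1)\sim \frac{\alpha}{2\Gamma(1-\alpha/2)}\,r$, which follows from the Lévy measure of the subordinator. Then $\int_0^\infty e^{-cr}\,r\,\dx[r]=c^{-2}$ yields $(\lambda-m)^{-2}$ and the extra $|y|^{-\alpha}$, giving the exponent $(d+\alpha+1)/2$. The first term is exponentially negligible here, since $e^{-\lambda t}$ beats $e^{mt}$.

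The main obstacle is making the Laplace step uniform in $s\in(0,t]$, especially for small $s$ relative to $u_0^{\alpha/2}$, where $\eta(s,\cdot)$ is sharply peaked and freezing it at $u_0$ may fail. I would first try splitting at $s=\varepsilon u_0^{\alpha/2}$ and using the tail bound $\eta(s,u)\asymp s\,u^{-1-\alpha/2}$ for $u\gg s^{2/\alpha}$. Under that bound the frozen approximation is valid on both pieces, so dominated convergence closes each case.
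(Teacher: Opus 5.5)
Your plan works for $\lambda>m$, and for $\lambda<m$, where $t<M|y|^{\alpha/2}$ keeps $r=su_0^{-\alpha/2}$ bounded. It has a genuine hole in the case $\lambda=m$, which the theorem asserts with no restriction on $t/|y|^{\alpha/2}$.

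There the time integral runs over $(0,t]$, possibly with $s\gg |y|^{\alpha/2}$, i.e.\ $r\to\infty$. In that regime $\eta(s,u)=u^{-1}\eta(su^{-\alpha/2},1)$ is \emph{not} slowly varying on the Laplace window. Since $\eta(r,1)$ decays like $\exp(-c\,r^{2/(2-\alpha)})$ as $r\to\infty$, the ratio $\eta(s,u)/\eta(s,u_0)$ for $|u/u_0-1|\asymp |y|^{-1/2}$ is of order $\exp(c\,r^{2/(2-\alpha)}|y|^{-1/2})$. So freezing at $u_0$ breaks down once $r\gtrsim |y|^{(2-\alpha)/4}$. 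Your closing claim that the frozen approximation is valid on both pieces of the split at $s=\varepsilon u_0^{\alpha/2}$ is therefore false on the upper piece.

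The bound you quote does not rescue this. The relation $\eta(s,u)\asymp s u^{-1-\alpha/2}$ holds only for $u\gg s^{2/\alpha}$, the opposite regime. Used as the global upper bound \eqref{eq:subord_ineq}, it gives $\int_0^t s\,\dx[s]\sim t^2$, which is no dominating function when $t\gg |y|^{\alpha/2}$.

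The contribution from $s>Cu_0^{\alpha/2}$ is in fact negligible, but you need an extra ingredient to show it. One option is the bound $\eta(s,u)\le c\,s^{-2/\alpha}$ (bounded density plus scaling). It is integrable at infinity because $\alpha<2$, and it gives that piece relative size $O(C^{1-2/\alpha})$. The other option is what the paper does: apply Fubini \emph{before} the Laplace step. The time integral then becomes $u^{\alpha/2-1}g\bigl(t^{2/\alpha}/(|y|u)\bigr)$ with $g(v)=\int_0^{v^{\alpha/2}}\eta(z,1)\,\dx[z]$, which is bounded, behaves like $\frac12\mathcal{C}_{\alpha/2}v^{\alpha}$ at $0$, and has a positive limit at $\infty$. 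A Laplace lemma uniform in the parameter $t^{2/\alpha}/|y|$ (Lemma \ref{lm:intasym}) finishes it. The same bound $\eta\le c\,s^{-2/\alpha}$ is also what you need to justify your unproved claim that $e^{-mt}p(t,y-x)$ is of lower order when $t/|y|^{\alpha/2}\to\infty$.

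Two smaller remarks. First, your Laplace width factor is off by a factor $2$. For $\phi(u)=|y|^2/(4u)+m^{2/\alpha}u$ one has $\phi''(u_0)=4m^{3/\alpha}/|y|$, so the factor should be $\sqrt{\pi u_0}\,m^{-1/\alpha}$, not $\sqrt{4\pi u_0/m^{2/\alpha}}$. With that correction your bookkeeping does reproduce all three stated constants.

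Second, your integration-by-parts idea for the factor $m-\lambda$ in Part 3 does work, and it is shorter than the paper's Lemmas \ref{lemm:egal}--\ref{lm:etabyinteta}. By \eqref{eq:seriesstab} and scaling, $\eta(s,1)=\pi^{-1}\sum_{n\ge 1}\frac{(-1)^{n-1}\Gamma(n\alpha/2+1)}{n!}\sin(n\alpha\pi/2)\,s^n$ is entire. Hence $\partial_s\eta(\cdot,1)$ is bounded on $[0,T]$, while $\eta(s,1)\ge c_0^{-1}s$ there. Put $L=(m-\lambda)|y|^{\alpha/2}/\sqrt{2^\alpha m}$, so that $L\kappa(t,y)=(m-\lambda)t$. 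The remainder $L^{-1}\int_0^{\kappa}e^{Ls}|\partial_s\eta(s,1)|\,\dx[s]\le CL^{-2}e^{L\kappa}$ is then $O(1/((m-\lambda)t))$ relative to the boundary term $L^{-1}e^{L\kappa}\eta(\kappa,1)$. Likewise, uniform freezing for $s\le M|y|^{\alpha/2}$ holds, using $\eta(\rho,1)\asymp\rho$ on bounded sets and \eqref{eq:subord_ineq} for the $u$-tails. This lets you bypass the paper's reduction to $[at,t]$ in Part 3.
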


We give the proof of Theorem \ref{th:relatproc} in Section \ref{section:RelStP}. 
Note that the behavior of $p_\lambda$ depends
here strongly on the value of the parameter $m.$
For small parameter $m$ (smaller than $\lambda$) the behavior of density does not depend on rate of convergence of time and space. It behaves like the density of ergodic measure ($\lambda u_\lambda$). If $m=\lambda$ and $t^2/|y|^\alpha$ goes to infinity the behavior is the same, but if this ratio is small $p_\lambda$ behaves like $c \,t^2u_\lambda$. 

The reader may be interested to know that an analogous phenomenon appears in studies of the $\lambda$-potential kernel $u_\lambda(y) =  \lim_{t\to\infty} (p_\lambda(t,0,y) - e^{-\lambda t}p(t,y))/\lambda$ of the relativistic process. Namely, it was recently proved in \cite{KaletaSchillingSztonyk2025} that $u_\lambda$ for $\lambda>m$ is for large $y$ comparable to the density of the L\'evy measure
of $\boldsymbol{Y}$ (see \ref{eq:relLevy}) but has a different asymptotic for $\lambda\leq m,$ i.e., for every $\varepsilon>0$ there
exists $c>0$ such that
\[
  c^{-1} e^{-\left(\sqrt{m^{2/\alpha}-(m-\lambda)^{2/\alpha}}+\varepsilon\right)|y|} \leq 
  u_\lambda(y) \leq c e^{-\left(\sqrt{m^{2/\alpha}-(m-\lambda)^{2/\alpha}}-\varepsilon\right)|y|},\quad |y|\geq 1.
\]

\section{Brownian motion}\label{section:Wiener}
In this section, we provide the proof of Theorem \ref{th:Wiener}.
\begin{proof}[Proof of Theorem \ref{th:Wiener}.]
    Assume first that $\frac{t}{|y|} \geq M > \frac{1}{\sqrt{2\lambda}}$ and denote $f(u)=\lambda u + \frac{1}{2u}, \, u\in (0,\infty).$ The function $f$ is decreasing
    on $\left(0,\frac{1}{\sqrt{2\lambda}}\right]$ and increasing on $\left[\frac{1}{\sqrt{2\lambda}},\infty\right)$ with the global minimum
    at $u_0=\frac{1}{\sqrt{2\lambda}}, f(u_0) = \sqrt{2\lambda}.$ We observe that 
    \[
      \int_0^{\frac{t}{|y|}} u^{-d/2} e^{-|y|f(u)}\, \dx[u] \sim \int_0^M u^{-d/2} e^{-|y|f(u)}\, \dx[u],
    \]
    and by Laplace method (see Lemma \ref{thm:laplace}) we get
    \eqref{eq:WienerFirst}. 

Let $ \frac{t}{|y|}\leq M < \frac{1}{\sqrt{2\lambda}}$ 
%we first observe that
%\[
 % \int_0^{\frac{t}{|x|}} u^{-d/2} e^{-|x|f(u)}\, \dx[u]
  %\sim \int_{m/2}^{\frac{t}{|x|}} u^{-d/2} e^{-|x|f(u)}\, %\dx[u].
%\]
We denote $\kappa :=t/|y|$ and by the substitution
$w=f(u) - f(\kappa),$ $u(w)=\frac{w+f(\kappa) - \sqrt{(w+f(\kappa))^2 - 2\lambda}}{2\lambda}= \frac{1}{w+f(\kappa) + \sqrt{(w+f(\kappa))^2 - 2\lambda}}, u'(w) = \frac{-u(w)}{\sqrt{(w+f(\kappa))^2-2\lambda}},$ and for $h(s) = \frac{(s + \sqrt{s^2 - 2\lambda})^{d/2-1}}{\sqrt{s^2-2\lambda}}$, 
we obtain
\begin{align*}
    \int_{0}^{\frac{t}{|y|}} u^{-d/2} e^{-|y|f(u)}\, \dx[u]  & = e^{-|y|f(\kappa)} \int_0^{\infty} (u(w))^{-d/2} e^{-|y|w} (-u'(w))\, \dx[w] \\
    & =e^{-|y|f(\kappa)} \int_0^{\infty} e^{-|y|w}h(w+f(\kappa))\, \dx[w], \\
    & = e^{-|y|f(\kappa)} \int_0^{\infty} e^{-|y|w}h(f(\kappa))\, \dx[w] \\
    & \quad +\,\, e^{-|y|f(\kappa)} \int_0^{\infty} e^{-|y|w}\int_0^w h'(s+f(\kappa))\, \dx[s]\, \dx[w] \\
    &= \frac{e^{-|y|f(\kappa)}}{|y|}\left[h(f(\kappa))+\int_0^{\infty} e^{-w}\int_0^{w/|y|} h'(s+f(\kappa))\, \dx[s]\, \dx[w]\right].
\end{align*}
We observe that $h'(u) = h(u)\frac{(d/2-1)\sqrt{u^2-2\lambda}-u}{u^2-2\lambda},$ so there exist $c_1=c_1(M,d), c_2=c_2(M,d)$ such that
$|h'(u)| \leq c_1 \frac{h(u)}{u} \approx u^{d/2-3}$ for $u\in [f(M),\infty)$ and\\ 
$|h'(s+f(\kappa))| \leq c_2 h(\max\{s,f(\kappa)\})/\max\{s,f(\kappa)\}$, for $s\geq 0$. Hence

\begin{align*}
    &\left| \int_0^{\infty} e^{-w}\int_0^{w/|y|} h'(s+f(\kappa))\, \dx[s]\, \dx[w] \right|  \\
    &\leq
    \frac{c_3 h(f(\kappa))}{|y|}\int^{\infty}_0e^{-w}w\,\dx[w] 
+ c_3\int^\infty_{|y|f(\kappa)}e^{-w}\int^{w/|y|}_{f(\kappa)}\frac{h(s)}{s}\dx[s]\, \dx[w]\\
&\leq  \frac{c_4 h(f(\kappa))}{|y|} + \frac{c_5}{f(\kappa)}h(f(\kappa))\int^\infty_{|y|f(\kappa)}e^{-w}\int^{w/|y|}_{f(\kappa)}\left(\frac{s}{f(\kappa)}\right)^{(d-6)/2}\dx[s]\, \dx[w]
\end{align*}
%where $$\mathrm{I}=$$ 
%Obviously, $h'(s)$ is continuous and therefore bounded on $[f(M),f(m/2)],$ and denoting $c_0 = \sup\{|h'(s)|:\: s\in [f(M),f(m/2)]\},$ we get
%We have
%\[
%  h'(s) = \frac{(s + \sqrt{s^2 - 2\lambda})^{d/2-1}\left(\left(\frac{d}{2}-1\right)\sqrt{s^2-2\lambda}\,  - s\right)}{(s^2 - 2\lambda)^{3/2}}
%\]
%\[
%  \int_0^{f(m/2)-f(\kappa)} e^{-|x|w}\int_0^w \left| h'(s+f(\kappa))\right|\, \dx[w]
 %  \leq c_0 \int_0^{f(m/2)-f(\kappa)} we^{-|x|w}\, \dx[w]
  % \leq \frac{c_0}{|x|^2}, %\left(\frac{1}{|x|^2}- \frac{e^{-c_1|x|}}{|x|}\left(c_1+\frac{1}{|x|}\right)\right),
%\]
and this yields
\begin{align*}
  \int_{0}^{\frac{t}{|y|}} u^{-d/2} e^{-|y|f(u)}\, \dx[u] & %\sim e^{-|y|f(\kappa)}h(f(\kappa))\frac{1}{|y|}
  \sim \frac{e^{-|y|f(\kappa)}h(f(\kappa))}{|y|} \\
  & = \frac{2e^{-\lambda t-\frac{|y|^2}{2t}} (|y|/t)^{d/2}}{|y|\left((|y|/t)^2 - 2\lambda\right)} ,
\end{align*}
%since $f(m/2)-f(\kappa)\geq f(m/2)-f(m),$ 
and \eqref{eq:WienerSecond} follows by \eqref{eq:res_den_nores}.

The case $\frac{t}{|y|}=\frac{1}{\sqrt{2\lambda}}$ follows by an obvious modification of the original 
Laplace's method (see, e.g., \cite{Nemes2013}). The details are omitted.
\end{proof}

\section{Processes with heavy tails}
\setcounter{definition}{0}

First, we prove the following lemma, which allows us to describe the asymptotic behavior of
the process $\boldsymbol{X}$ in several important cases below.
\begin{lemma}\label{lemma:asymptotic}
    Assume that for every $t>0$ there exists a transition density $p(t,y)$ of the L\'evy process $\boldsymbol{Y}$  and let $\theta\in\mathbb{S}^{d-1},$ $\lambda>0$, and $x\in\Rd$. If there exist Borel functions $f,A,\tilde{A}:\mathbb{R}_+\to\mathbb{R}_+$, and $r_0>0$ such that
	\[
        \lim_{r\to\infty} \frac{p(t,r\theta)}{f(r)}= A(t), \, t>0,
        \quad\text{ and } \quad
        \frac{p(t,r\theta-z)}{f(r)}\leq \tilde{A}(t),\, t>0, r>r_0, z\in \{0,x\}, 
	\]
	\[
        \lim\limits_{t\to\infty} e^{-\lambda t} \tilde A(t)=0
        \quad \text{ and } \quad
        \int\limits_{0}^\infty e^{-\lambda t} \tilde A(t)\, \dx[t]<\infty,
	\]
	then the density $p_\lambda$ of the corresponding process with resetting $\boldsymbol{X}$ satisfies
	\[
		\lim\limits_{\substack{r\to\infty\\t\to\infty}}\frac{p_\lambda(t,x,r\theta)}{f(r)}=\int\limits_0^\infty \lambda e^{-\lambda s}A(s)\, \dx[s].
	\]
\end{lemma}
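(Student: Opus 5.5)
We start from the representation \eqref{eq:res_den_nores} of Theorem \ref{th:formula}, written with $y=r\theta$:
\[
\frac{p_\lambda(t,x,r\theta)}{f(r)} = e^{-\lambda t}\frac{p(t,r\theta-x)}{f(r)} + \int_0^t \lambda e^{-\lambda s}\frac{p(s,r\theta)}{f(r)}\,\dx[s].
\]
We treat the two terms separately and let $r\to\infty$ and $t\to\infty$ jointly, with no restriction on how they are related.

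\textbf{First term.} For $r>r_0$ the domination hypothesis with $z=x$ gives
\[
0\le e^{-\lambda t}\frac{p(t,r\theta-x)}{f(r)}\le e^{-\lambda t}\tilde A(t).
\]
The right-hand side does not depend on $r$ and tends to $0$ as $t\to\infty$ by assumption. Hence the first term tends to $0$ along any joint limit.

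\textbf{Second term.} Write it as
\[
\int_0^\infty \lambda e^{-\lambda s}\frac{p(s,r\theta)}{f(r)}\mathbf{1}_{(0,t)}(s)\,\dx[s].
\]
For $r>r_0$ the integrand is bounded by $\lambda e^{-\lambda s}\tilde A(s)$, using the hypothesis with $z=0$. This majorant is integrable on $(0,\infty)$ and independent of both $r$ and $t$.

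To handle the joint limit, take an arbitrary sequence $(r_n,t_n)$ with $r_n\to\infty$ and $t_n\to\infty$. For each fixed $s>0$, the integrand converges pointwise to $\lambda e^{-\lambda s}A(s)$. Indeed, $p(s,r_n\theta)/f(r_n)\to A(s)$, and $\mathbf{1}_{(0,t_n)}(s)=1$ for all sufficiently large $n$.

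Dominated convergence then gives convergence of the integral to $\int_0^\infty\lambda e^{-\lambda s}A(s)\,\dx[s]$. This limit is finite because $A\le\tilde A$ pointwise, which follows by passing to the limit in the domination bound. Since the sequence was arbitrary, the joint limit exists, and adding the two terms gives the claim.

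\textbf{Expected difficulty.} The only delicate point is that the limit is joint in $(r,t)$, so the problem cannot be reduced to an iterated limit. The sequential argument above resolves this, because the majorant $\lambda e^{-\lambda s}\tilde A(s)$ is uniform in both parameters.

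Measurability is also needed. The maps $s\mapsto p(s,r\theta)$ are Borel, which follows from \eqref{eq:res_den_nores} being well defined (joint measurability of the transition density). The functions $A$ and $\tilde A$ are Borel by assumption. I expect no further obstacle.
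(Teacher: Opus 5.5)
Your proof is correct and follows the paper's argument: you bound the non-reset term by $e^{-\lambda t}\tilde A(t)\to 0$, and you apply dominated convergence with the majorant $\lambda e^{-\lambda s}\tilde A(s)$ to the integral term in \eqref{eq:res_den_nores}. Your reduction of the joint limit to arbitrary sequences $(r_n,t_n)$, together with the observation $A\le\tilde A$, only makes explicit what the paper does implicitly when it passes the joint limit under the integral.
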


\begin{proof}
We have
\[
  0 \leq \frac{e^{-\lambda t}p(t,r\theta-x)}{f(r)}  
  \leq e^{-\lambda t} \tilde A(t)  \xrightarrow{t \to \infty} 0, \quad r>r_0.
\]
Furthermore,
\[
   \lambda e^{-\lambda s} \frac{p(s,r\theta)}{f(r)} \leq \lambda e^{-\lambda s} \tilde{A}(s), \quad s>0, r>r_0,
\]
and $\int_0^\infty e^{-\lambda s}\tilde{A}(s)\, \dx[s] <\infty,$
%\[
%    \int\limits_0^t \lambda e^{-\lambda s}\frac{p(s,r\theta)}{f(r)}\dx[s]\leq\lambda\int\limits_0^\infty e^{-\lambda s}\frac{p(s,r\theta)}{f(r)}\dx[s]\leq\lambda\int\limits_0^\infty e^{-\lambda s}\tilde A(s)\dx[s]<\infty, \quad t>0, r>r_0,
%\] 
hence, using the dominated convergence, we obtain
\[
    \lim\limits_{\substack{r\to\infty\\t\to\infty}	}\frac{1}{f(r)}\int\limits_0^t\lambda e^{-\lambda s}p(s,r\theta)\dx[s]
    =\int_0^\infty \lim\limits_{\substack{r\to\infty\\t\to\infty}}\lambda e^{-\lambda s}\frac{p(s,r\theta)}{f(r)}\mathds{1}_{(0,t)}\dx[s]
    =\int\limits_0^\infty\lambda e^{-\lambda s}A(s)\dx[s],
\]
and by \eqref{eq:res_den_nores} the lemma follows.

\end{proof}

\subsection{Isotropic stable process.}
\begin{theorem}
    Let $\alpha\in (0,2)$ and $Y_t$ be an isotropic $\alpha$--stable process in $\Rd$, i.e., 
    \[
        \mE e^{i \langle\xi,Y_t\rangle } = e^{-t|\xi|^\alpha}, \quad \xi\in\Rd, t>0.
    \]
    Then the corresponding process with resetting $X_t$ for all $\lambda>0, x\in\Rd, t>0$ has a density $p_\lambda(t,x,y)$
    such that
    \begin{equation}\label{eq:asstabres}
    p_\lambda(t,x,y)\sim\frac{\mathcal{A}_{d,\alpha}}{\lambda}|y|^{-(d+\alpha)},
	\end{equation}
    where
    \begin{equation}\label{eq:lim_stable}
		\mathcal{A}_{d,\alpha}=\alpha2^{\alpha-1}\pi^{-d/2-1}\sin\left(\frac{\alpha\pi}{2}\right)\Gamma\left(\frac{\alpha}{2}\right)\Gamma\left(\frac{\alpha+d}{2}\right).
    \end{equation}
\end{theorem}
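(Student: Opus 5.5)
Everything will follow from Lemma \ref{lemma:asymptotic} with $f(r)=r^{-(d+\alpha)}$, using scaling and the known tail asymptotics of the isotropic stable density. Fix $\theta\in\mathbb{S}^{d-1}$ and $x\in\Rd$. The density exists and satisfies the scaling relation $p(t,y)=t^{-d/\alpha}p(1,t^{-1/\alpha}y)$. We also use the classical result (Blumenthal--Getoor, Pólya) that
\[
\lim_{|y|\to\infty}|y|^{d+\alpha}p(1,y)=\mathcal{A}_{d,\alpha},
\]
where $\mathcal{A}_{d,\alpha}$ is the density constant of the Lévy measure of the isotropic stable process. This constant is exactly \eqref{eq:lim_stable}, which we identify via the reflection formula for $\Gamma(1-\alpha/2)$. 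Scaling then gives
\[
\lim_{r\to\infty} r^{d+\alpha}p(t,r\theta)=t^{-d/\alpha}\,t^{(d+\alpha)/\alpha}\mathcal{A}_{d,\alpha}=t\,\mathcal{A}_{d,\alpha},
\]
so $A(t)=\mathcal{A}_{d,\alpha}t$.

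For the domination we combine the global estimate $p(1,y)\le c(1\wedge|y|^{-d-\alpha})$ with scaling:
\[
p(t,y)\le c\bigl(t^{-d/\alpha}\wedge t|y|^{-d-\alpha}\bigr)\le c\,t|y|^{-d-\alpha}.
\]
For $z\in\{0,x\}$ and $r>r_0:=2|x|+1$ we have $|r\theta-z|\ge r/2$. Hence
\[
p(t,r\theta-z)\,r^{d+\alpha}\le c\,2^{d+\alpha}t=:\tilde A(t).
\]
This $\tilde A$ satisfies $e^{-\lambda t}\tilde A(t)\to0$ and $\int_0^\infty e^{-\lambda t}\tilde A(t)\,\dx[t]<\infty$. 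So all the hypotheses of Lemma \ref{lemma:asymptotic} hold.

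The lemma then gives
\[
\frac{p_\lambda(t,x,r\theta)}{r^{-(d+\alpha)}}\to\int_0^\infty\lambda e^{-\lambda s}\mathcal{A}_{d,\alpha}s\,\dx[s]=\frac{\mathcal{A}_{d,\alpha}}{\lambda}.
\]
This holds for each fixed direction $\theta$.

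The main obstacle is passing from a fixed direction $\theta$ to $|y|\to\infty$ in an arbitrary manner, as the definition of $\sim$ requires. The plan is to exploit rotation invariance. Rotation invariance makes the stable part uniform in direction: $p(s,r\theta)=p(s,re_1)$, so the integral term in \eqref{eq:res_den_nores} does not depend on $\theta$ at all. The only $\theta$-dependent term is $e^{-\lambda t}p(t,r\theta-x)r^{d+\alpha}$, and it is bounded by $e^{-\lambda t}\tilde A(t)$ uniformly in $\theta$. That term therefore tends to $0$ uniformly, and the dominated convergence argument for the integral term is $\theta$-free. Rerunning the proof of Lemma \ref{lemma:asymptotic} in this form gives \eqref{eq:asstabres} uniformly in the direction.

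The remaining care point is checking that the constant in \eqref{eq:lim_stable} matches the standard Lévy-measure constant $\alpha2^{\alpha-1}\pi^{-d/2}\Gamma(\frac{d+\alpha}{2})/\Gamma(1-\frac{\alpha}{2})$. It does, via $\Gamma(\frac{\alpha}{2})\Gamma(1-\frac{\alpha}{2})=\pi/\sin(\frac{\alpha\pi}{2})$.
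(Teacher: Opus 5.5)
Your proposal is correct and matches the paper's proof step for step. Scaling together with the Blumenthal--Getoor tail limit gives $A(t)=\mathcal{A}_{d,\alpha}t$. Boundedness of $p(1,\cdot)$ plus scaling give the domination $\tilde A(t)=c\,t$ for $|y|>2|x|$. Lemma~\ref{lemma:asymptotic} then produces $\mathcal{A}_{d,\alpha}/\lambda$. Your rotation-invariance argument for uniformity in the direction $\theta$ spells out what the paper compresses into ``by the isotropy of $p(t,\cdot)$''.
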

\begin{proof}
The process $Y_t$ has for every $t>0$ a transition density $p(t,z)$ which satisfies
the scaling property
    \[
        p(t,z) = t^{-d/\alpha} p(1,z/t^{1/\alpha}), \quad z\in\Rd, t>0.
    \]
    It was proved in \cite{BlumenthalGetoor1960} that
    \begin{equation}\label{eq:BlumenthalGetoor}
		\lim\limits_{|z|\to\infty}|z|^{d+\alpha}p(1,z)=\mathcal{A}_{d,\alpha},%\quad \text{and}\quad p(t,x)=t^{-d/\alpha}p_\alpha(1,t^{-1/\alpha}x),
    \end{equation}	
%	Since $|t^{-1/\alpha}x|\to\infty$ for each $t>0$ and self-similarity of stable process holds
	%Since $|t^{-1/\alpha}x|\to\infty$ as $|x|\to\infty$ and that stable processes are self-similar, holds
        hence %for every $x\in\Rd$ we have
	\[%\label{eq:lim_alpha}
            \lim\limits_{|y|\to\infty} \frac{p(t,y)}{|y|^{-d-\alpha}} =
            \lim\limits_{|y|\to\infty} \frac{ p(1,y/t^{1/\alpha})}{|y/t^{1/\alpha}|^{-d-\alpha}}
		t = \mathcal{A}_{d,\alpha} t, \quad t>0.
	\]
        Moreover, we have 
        $p(1,z) = (2\pi)^{-d}\int_{\Rd} e^{-i\langle z,\xi\rangle}e^{-|\xi|^\alpha}\, \dx[\xi]\leq 
        (2\pi)^{-d}\int_{\Rd} e^{-|\xi|^\alpha}\, \dx[\xi] $  for all $z\in\Rd,$ 
        and this, \eqref{eq:BlumenthalGetoor} and the scaling property yield %Furthermore, from Blumenthal i Getoor[TBA], it's known that
	\[
		p(t,y-x)\leq c_1 t|y|^{-d-\alpha},\quad  x,y\in\Rd, |y|>2|x|, t>0,
	\]
        for some constant $c_1>0.$
	Thus the density $p(t,z)$ of the process $Y_t$ satisfies the assumptions of Lemma~\ref{lemma:asymptotic} for $f(r)=r^{-d-\alpha}$, $A(t)=\mathcal{A}_{d,\alpha}t$ and $\tilde{A}(t)=c_1 t$, and by this lemma and the isotropy of $p(t,\cdot),$
    we obtain 
    %$p_\lambda (t,0,y) \sim \frac{\mathcal{A}_{d,\alpha}}{\lambda}|y|^{-(d+\alpha)}.$ 
    %Using this and the fact
    %that 
    %\[
    %\frac{|p_\lambda(t,x,y)-p_\lambda(t,0,y)|}{|y|^{-d-\alpha}} = 
    %\frac{e^{-\lambda t}|p(t,y-x)-p(t,y)|}{|y|^{-d-%\alpha}} \leq c_2 t e^{-\lambda t} \to 0,\]
    %as $t,|y| \to\infty,$ we get
    \eqref{eq:asstabres}.
    \end{proof}
\subsection{Stable process with independent coordinates}
\begin{theorem}
    Let $\alpha\in(0,2)$ and $Y_t$ be a cylindrical $\alpha$-- stable process in $\Rd,$ i.e.,
    \[
       \mE e^{i\langle \xi, Y_t \rangle } = \prod_{k=1}^d e^{-t|\xi_k|^\alpha} = e^{-t\sum_{k=1}^d|\xi_k|^\alpha},\quad \xi\in\Rd, t>0.
    \]
    Let $\theta\in \mathbb{S}^{d-1} = \{y\in\Rd:\: |y|=1\} $ and let $m$ denote a number of non--zero coordinates of $\theta.$
    If $m>\frac{d-\alpha}{\alpha+1}$
    then for all $x\in\Rd, \lambda>0$ we have
    \begin{equation}\label{eq:asymptforcylind}
		p_\lambda(t,x,r\theta)\sim_{t,r}\, \lambda^{\frac{d-m}{\alpha}-m} C_{\alpha,d}\Big(\prod\limits_{\substack{i=1,...,d\\\theta_i\neq0}}|\theta_i|^{-(1+\alpha)}\Big) r^{-m(1+\alpha)},
    \end{equation}
    where \footnote{$A(t,r)\sim_{t,r} B(t,r)$ means, that $\lim\limits_{\substack{r\to\infty\\t\to\infty}}\frac{A(t,r)}{B(t,r)}=1,$ and $A(t,r)\sim_r B(t,r)$ means, that $\lim\limits_{r\to\infty}\frac{A(t,r)}{B(t,r)}=1.$}
    \[
       C_{\alpha,d} = \mathcal{A}_{1,\alpha}^m \Gamma\left(\frac{m-d}{\alpha}+m+1\right)\left(\frac{\Gamma(\frac{1}{\alpha})}{\alpha\pi}\right)^{d-m}.
    \]
\end{theorem}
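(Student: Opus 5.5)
The plan is to apply Lemma \ref{lemma:asymptotic} with $f(r)=r^{-m(1+\alpha)}$. The density factorizes as $p(t,z)=\prod_{k=1}^d g(t,z_k)$, where $g(t,u)=t^{-1/\alpha}g(1,ut^{-1/\alpha})$ is the one-dimensional symmetric $\alpha$-stable density.

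\textbf{Identifying $A(t)$.} The known one-dimensional facts are
\[
g(1,0)=\frac{\Gamma(1/\alpha)}{\alpha\pi},\qquad \lim_{|u|\to\infty}|u|^{1+\alpha}g(1,u)=\mathcal{A}_{1,\alpha},
\]
where the second is \eqref{eq:BlumenthalGetoor} with $d=1$. Scaling gives $g(t,u)\sim \mathcal{A}_{1,\alpha}\,t\,|u|^{-1-\alpha}$ as $|u|\to\infty$. For coordinates with $\theta_k\neq0$ this yields the factor $\mathcal{A}_{1,\alpha}\,t\,|\theta_k|^{-1-\alpha}r^{-1-\alpha}$. For the $d-m$ coordinates with $\theta_k=0$ the factor is exactly $g(t,0)=t^{-1/\alpha}\Gamma(1/\alpha)/(\alpha\pi)$. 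Hence
\[
A(t)=\mathcal{A}_{1,\alpha}^m\Big(\frac{\Gamma(1/\alpha)}{\alpha\pi}\Big)^{d-m}\Big(\prod_{\theta_i\neq0}|\theta_i|^{-1-\alpha}\Big)\,t^{\,m-\frac{d-m}{\alpha}}.
\]

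\textbf{The majorant $\tilde A(t)$.} Each factor admits the uniform bound $g(t,u)\le c\min\{t^{-1/\alpha},\,t|u|^{-1-\alpha}\}$. This follows from boundedness of $g(1,\cdot)$, the tail asymptotics, and scaling.
\begin{itemize}
\item For $k$ with $\theta_k\neq0$ and $r>r_0$ large (depending on $x$ and $\theta$), we have $|r\theta_k-z_k|\ge r|\theta_k|/2$ for $z\in\{0,x\}$. So $g(t,r\theta_k-z_k)\le c' t\, r^{-1-\alpha}$.
\item For $k$ with $\theta_k=0$ we use $g(t,-z_k)\le c t^{-1/\alpha}$.
\end{itemize}
Thus $\tilde A(t)=C\,t^{\,m-\frac{d-m}{\alpha}}$.

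\textbf{Checking the hypotheses of the lemma.} Clearly $e^{-\lambda t}\tilde A(t)\to0$. The only possible obstacle is integrability of $e^{-\lambda t}\tilde A(t)$ at $t=0$, which requires $m-\frac{d-m}{\alpha}>-1$, i.e.\ $m(1+\alpha)>d-\alpha$. This is exactly the assumption $m>\frac{d-\alpha}{\alpha+1}$. The lemma then gives the limit
\[
\int_0^\infty\lambda e^{-\lambda s}A(s)\,\dx[s].
\]
With $\beta=m-\frac{d-m}{\alpha}$, the Gamma integral gives
\[
\int_0^\infty \lambda e^{-\lambda s}s^{\beta}\,\dx[s]=\lambda^{-\beta}\,\Gamma(\beta+1)=\lambda^{\frac{d-m}{\alpha}-m}\,\Gamma\Big(\frac{m-d}{\alpha}+m+1\Big).
\]
This produces \eqref{eq:asymptforcylind} with the stated $C_{\alpha,d}$.

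The main point needing care is the uniform bound near $t\to0$ in the zero coordinates. Unlike the isotropic case, $\tilde A$ blows up at $0$ there, so the integrability condition is essential and is precisely where the hypothesis on $m$ enters.
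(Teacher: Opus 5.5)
Your proposal is correct and follows the paper's proof essentially step for step. Both factorize the density into one-dimensional stable densities, obtain $A(t)$ from Blumenthal--Getoor and scaling, take the majorant $\tilde A(t)=C\,t^{m-(d-m)/\alpha}$ (the paper writes it as $c_3A(t)$), use $m>\frac{d-\alpha}{\alpha+1}$ exactly for integrability at $0$, and finish with Lemma~\ref{lemma:asymptotic} and the Gamma integral.
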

\begin{proof}

	%Let $Y_t=\left(Y_t^{(1)}, Y_t^{(2)}, \dots, Y_t^{(d)}\right)$, where $\left\{Y_t^{(i)}\right\}$ are independent symmetric stable processes in $\mathbb{R}$ with density function $\tilde{p}(t,x)$. 
    The density of $Y_t$ can be written as
	\[
		   p(t,r\theta)=\prod_{i=1}^{d}\tilde{p}(t,r\theta_i), \quad \theta=\left(\theta_1,\theta_2,\dots,\theta_d\right)\in\mathbb{S}^{d-1}, r>0,
	\]
	where $\tilde{p}$ is the density of the one--dimensional $\alpha$--stable symmetric L\'evy process. Using the scaling property and \eqref{eq:BlumenthalGetoor} for $d=1$ we get  
	\[
		\tilde{p}(t,r\theta_i)\sim_r\left\{
          \begin{array}{ll}
			t^{-1/\alpha}\tilde{p}(1,0) & \text{for } \,\theta_i=0, \\
			\dfrac{\mathcal{A}_{1,\alpha} t}{\left(r|\theta_i|\right)^{1+\alpha}} & \text{for }\, \theta_i\neq0.
		\end{array}\right.
        \]
    Note that we have $\mathcal{A}_{1,\alpha}=\Gamma(\alpha+1)\sin(\pi\alpha/2)/\pi$. It follows that
 	\[
 		\lim\limits_{r\to\infty}\frac{p(t,r\theta)}{r^{-m(1+\alpha)}}=t^{-(d-m)/\alpha}\tilde{p}^{d-m}(1,0)\mathcal{A}_{1,\alpha}^mt^{m}\prod\limits_{\substack{i=1,...,d\\\theta_i\neq0}}|\theta_i|^{-(1+\alpha)}.
 	\]
	%where $m$ is number of non-zero coordinates of $\theta$. 
    Furthermore,
     \begin{align*}
      \frac{p(t,r\theta-x)}{r^{-m(1+\alpha)}} &
      = 
       \frac{\prod_{i=1}^d\tilde{p}(t,r\theta_i - x_i)}{r^{-m(1+\alpha)}} \\
      & =  
      t^{-d/\alpha} \prod\limits_{\substack{i=1,...,d\\ \theta_i\neq 0}} \left(r^{1+\alpha}\tilde{p}\left(1,\frac{r\theta_i-x_i}{t^{1/\alpha}}\right)\right)
      \prod\limits_{\substack{i=1,...,d\\ \theta_i =0}} \tilde{p}\left(1,\frac{x_i}{t^{1/\alpha}}\right) \\
      & \leq 
      c_1 t^{-d/\alpha} \prod\limits_{\substack{i=1,...,d\\ \theta_i\neq 0}} \left(\frac{t^{1/\alpha}r}{|r\theta_i-x_i|}\right)^{1+\alpha}
      \leq c_2 t^{m(1+1/\alpha)-d/\alpha},
    \end{align*}
    provided $r\geq \max\{2|x_i/\theta_i|:\: i=1,...,d,\, \theta_i\neq 0\}.$
    If $m(1+1/\alpha)-d/\alpha>-1$, then the
    assumptions of Lemma~\ref{lemma:asymptotic} hold for
    \[    
		f(r)=r^{-m(1+\alpha)}, \quad
		A(t)=\mathcal{A}_{1,\alpha}^m t^{m(1+1/\alpha)-d/\alpha}\tilde{p}^{d-m}(1,0)\prod\limits_{\substack{i=1,...,d\\\theta_i\neq0}}|\theta_i|^{-(1+\alpha)},
    \]
    and $\tilde{A}(t)=c_3 A(t)$    for some constant $c_3>0.$
	Therefore, we obtain \eqref{eq:asymptforcylind} using the Lemma \ref{lemma:asymptotic} and observing that $\tilde{p}(1,0) = (2\pi)^{-1} \int_{\R} e^{-|\xi|^\alpha}\, \dx[\xi] = (\pi\alpha)^{-1} \Gamma(\alpha^{-1}).$ 
\end{proof}

Note that in the case $m \leq \frac{d-\alpha}{1+\alpha}$ for fixed $r>0$ and sufficiently small $s>0,$ we have
\begin{align*}
   p(s,r\theta) & = (\tilde{p}(s,0))^{d-m} \prod\limits_{\substack{k=1,...,d\\ \theta_k\neq 0}} \tilde{p}(s,r\theta_k)
    = c_1 s^{-\frac{d-m}{\alpha}} \prod\limits_{\substack{k=1,...,d\\ \theta_k\neq 0}} s^{-\frac{1}{\alpha}} \tilde{p}(1,r\theta_k/s^{1/\alpha}) \\
    & \geq c_2 s^{-\frac{d-m}{\alpha}} \prod\limits_{\substack{k=1,...,d\\ \theta_k\neq 0}} (|r\theta_k|^{-1-
    \alpha}s) = c_2 r^{-m(1+\alpha)} \Big(\prod\limits_{\substack{k=1,...,d\\ \theta_k\neq 0}} |\theta_k|^{-1-\alpha}\Big)s^{m-\frac{d-m}{\alpha}},
\end{align*}
hence $p_\lambda(t,x,r\theta) \geq \int_0^t \lambda e^{-\lambda s} p(s,r\theta)\, \dx[s] = \infty,$ for all $t>0, x\in\Rd.$

%%%%%%%%%%%%%%%%%%%%%%%%%%%%%%%%%%%%%%%%%%%%%%%%%%%%%%%%%%%%%%%%%%%%%%%%%%%%%%%%%%%%%%%%%%%%%%%%%%%%%%%%%%%%%%%%%%%%%%%%%%%%%%%%%%%%%%%%%%% 3

\subsection{Stable subordinators.}
    \begin{theorem}
        Let $\beta\in(0,1)$ and $Y_t$ be a $\beta$ -- stable subordinator, i.e., real-valued L\'evy process with non-decreasing sample paths, such that 
	\[
          \mE e^{-s Y_t} = %\exp\left\{\frac{-\beta t}{\Gamma(1-\beta)}\int_0^\infty(1-e^{-s r})r^{-1-\beta}\,\dx[r]\right\} = 
          e^{-t s^\beta},\quad s,t>0, 
	\]
        and $\mP(Y_t<0)=0$ for all $t>0.$ Then, for all $\lambda>0, t>0,$ and $x\in\R,$ the density $p_\lambda(t,x,\cdot)$ of
        the corresponding process with resetting exists and satisfies
        \begin{equation}\label{eq:asfsub}
		   p_\lambda(t,x,u)\sim_{t,u}\,\, \frac{\beta}{\lambda\Gamma(1-\beta)}u^{-1-\beta}.
	\end{equation}
    \end{theorem}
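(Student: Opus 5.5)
We follow the scheme used for the isotropic stable process and apply Lemma \ref{lemma:asymptotic} with $d=1$, $\theta=1$ and $f(r)=r^{-1-\beta}$. The $\beta$-stable subordinator has a density $p(t,\cdot)$ for every $t>0$. It satisfies the scaling property $p(t,u)=t^{-1/\beta}p(1,u/t^{1/\beta})$, which follows from $\mE e^{-sY_t}=e^{-ts^\beta}$. It also has the classical tail asymptotics
\[
  \lim_{u\to\infty} u^{1+\beta}p(1,u)=\frac{\beta}{\Gamma(1-\beta)}.
\]
This can be justified either from the density of the L\'evy measure, $\nu(\dx[r])=\frac{\beta}{\Gamma(1-\beta)}r^{-1-\beta}\dx[r]$, together with the standard fact that for subexponential (regularly varying) tails the density of $Y_1$ is asymptotic to the L\'evy density, or from the series expansion of $p(1,u)$ in powers of $u^{-\beta}$, whose first term is $\frac{1}{\pi}\Gamma(1+\beta)\sin(\pi\beta)u^{-1-\beta}$; by the reflection formula this equals $\frac{\beta}{\Gamma(1-\beta)}u^{-1-\beta}$. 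Combining the tail asymptotics with scaling gives
\[
  \lim_{u\to\infty}\frac{p(t,u)}{u^{-1-\beta}}=\frac{\beta}{\Gamma(1-\beta)}\,t=:A(t).
\]

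For the domination hypothesis we need $p(t,u-z)\le c\,t\,u^{-1-\beta}$ for $z\in\{0,x\}$ and $u>r_0:=2|x|$. The density $p(1,\cdot)$ is bounded (for instance because $\int e^{-\mathrm{Re}(i\xi)^\beta}\dx[\xi]<\infty$, since $\mathrm{Re}(-i\xi)^\beta=|\xi|^\beta\cos(\pi\beta/2)>0$). Together with the tail limit this gives $p(1,v)\le c(1\wedge v^{-1-\beta})\le c\,v^{-1-\beta}$ for $v>0$, and $p(1,v)=0$ for $v\le0$. By scaling, $p(t,v)\le c\,t\,v^{-1-\beta}$, and for $u>2|x|$ we have $u-z\ge u/2$, so $p(t,u-z)\le c_1 t\,u^{-1-\beta}$. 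Thus $\tilde A(t)=c_1t$ works: $e^{-\lambda t}\tilde A(t)\to0$ and $\int_0^\infty e^{-\lambda t}\tilde A(t)\,\dx[t]<\infty$.

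Lemma \ref{lemma:asymptotic} then yields
\[
  \lim\frac{p_\lambda(t,x,u)}{u^{-1-\beta}}=\int_0^\infty\lambda e^{-\lambda s}\frac{\beta s}{\Gamma(1-\beta)}\,\dx[s]=\frac{\beta}{\lambda\Gamma(1-\beta)},
\]
which is \eqref{eq:asfsub}. The existence of $p_\lambda$ follows from Theorem \ref{th:formula}.

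The main obstacle is pinning down the exact constant in the tail asymptotics of $p(1,u)$ together with the uniform bound $p(1,v)\le cv^{-1-\beta}$. We would cite these from the literature on one-sided stable laws (Pollard's/Feller's series representation), rather than rederive them.
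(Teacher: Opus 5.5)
Your proposal is correct and follows essentially the same route as the paper. You apply Lemma~\ref{lemma:asymptotic} with $f(u)=u^{-1-\beta}$, with $A(t)=\beta t/\Gamma(1-\beta)$ from the series expansion and the reflection formula, and with $\tilde A(t)=ct$ from scaling and the restriction $u>2|x|$; the only difference is that you derive the global bound $p(1,v)\le c\,(1\wedge v^{-1-\beta})$ from boundedness of $p(1,\cdot)$ and the tail limit, whereas the paper cites Watanabe's estimate $\eta(1,u)\le c_1(1+u)^{-1-\beta}$, and both are valid.
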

    \begin{proof}
	Let $\eta(t,\cdot)$ denote the transition density of $Y_t$ (we change the notation here to be consistent with Section \ref{section:RelStP}). It follows from \cite[Theorem 1.1(i)]{watanabe} that
	\[
		  \eta(1,u)\leq c_1 (1+u)^{-(1+\beta)},\quad u>0,
	\]
	for some constant $c_1>0,$ and by the scaling property of $\eta(t,u)$ we obtain
	\begin{equation}\label{eq:subord_ineq}
		  \eta(t,u) = t^{-1/\beta} \eta(1,ut^{-1/\beta}) \leq c_1\frac{t}{u^{1+\beta}}, \quad u>0, t>0.
	\end{equation}
	Furthermore, using known (see, e.g., \cite[(4.2.4)]{Zolotariev1999}) series expansion 
        \begin{equation}\label{eq:seriesstab}
           \eta(1,u) = \pi^{-1}\sum_{n=1}^\infty \frac{(-1)^{n-1}\Gamma(n\beta+1)}{n!}\sin(n\beta\pi) u^{-n\beta-1},
           \quad u>0,
        \end{equation}
        and the formula $\sin(\pi x) = \frac{\pi}{\Gamma(x)\Gamma(1-x)}, x\in\R\setminus\mathbb{Z},$ we get 
	\[
		  \eta(t,u)\sim_u \frac{t}{u^{1+\beta}}\frac{\beta}{\Gamma(1-\beta)}, \quad t>0.
	\]	
       Furthermore, 
       \[
          \frac{\eta(t,u-x)}{u^{-1-\beta}} \leq c_1 t\left(\frac{u}{|u-x|}\right)^{1+\beta} \leq c_2 t,
        \]
        provided $u>2|x|.$
	Thus, for $f(u)=u^{-(1+\beta)}$ and $A(t)=t\beta/\Gamma(1-\beta), \tilde A(t)=c_1 t$, 
        the density $\eta$ satisfies the assumptions of Lemma~\ref{lemma:asymptotic} and using it we get \eqref{eq:asfsub}.
    \end{proof}
    For further use, we observe that the scaling property and \eqref{eq:seriesstab} also yield
    \begin{equation}\label{eq:limitsubLev}
        \lim_{t\to 0^+} \frac{\eta(t,u)}{t} = \frac{\beta}{\Gamma(1-\beta)}u^{-1-\beta} = : \mathcal{C_\beta}u^{-1-\beta},\quad u>0.
    \end{equation}
    We note that this is a particular case of a much more general property: $\lim_{t\to 0^+} p(t,x)/t = \nu(x),$ $x\neq 0,$ which holds for sufficiently regular densities of L{\'e}vy processes (here $\nu$ is a density of a~corresponding L{\'e}vy measure).

%%%%%%%%%%%%%%%%%%%%%%%%%%%%%%%%%%%%%%%%%%%%%%%%%%%%%%%%%%%%%%%%%%%%%%%%%%%%%%%%%%%%%%%%%%%%%%%%%%%%%%%%%%%%%%%%%%%%%%%%%%%%%%%%%%%%%%%%%%% 4

\subsection{Isotropic unimodal L\'evy process.}
    Let $Y_t$ be a pure-jump isotropic unimodal L\'evy process in $\Rd,$ i.e.,
        \[
           \mE e^{i\langle \xi, Y_t \rangle} = e^{-t\psi(\xi)}, \quad \xi\in\Rd, t\geq 0,
        \]
        where
        \[
           \psi(\xi) = \int_{\Rd} (1-\cos \langle\xi,y\rangle ) \nu(y)\,\dx[y], \quad \xi\in\Rd,
       \]
       \[
          \int_{\Rd} \nu(y)\, \dx[y] = \infty,  \quad \int_{\Rd} (|y|^2\wedge 1) \nu(y)\,\dx[y]<\infty,
       \]
       and the L\'evy density $\nu$ is isotropic and non-increasing in $|y|$.
    \begin{theorem} Let $Y_t$ be a pure-jump isotropic unimodal L\'evy process in $\Rd$. 
         Assume that
    $\psi$ is regularly varying of index $\alpha\in (0,2)$ at zero, i.e.,
	\[
		\lim\limits_{x\to 0^+}\frac{\psi\left(\lambda x\right)}{\psi\left(x\right)}=\lambda^{\alpha},\quad\lambda>0.
	\]
    Then, for all $\lambda, t >0, x\in\Rd,$ the density $p_\lambda(t,\cdot)$ of the corresponding process with resetting exists  and satisfies
    \begin{equation}\label{eq:unimodalcaseasympt}
       p_\lambda(t,x,y) \sim \frac{\mathcal{A}_{d,\alpha}}{\lambda} |y|^{-d} \psi(|y|^{-1}).
    \end{equation}
    \end{theorem}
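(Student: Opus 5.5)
We plan to apply Lemma~\ref{lemma:asymptotic} with $f(r)=r^{-d}\psi(1/r)$ and $A(t)=\mathcal{A}_{d,\alpha}t$. Since $p(t,\cdot)$ is isotropic, it suffices to fix an arbitrary $\theta\in\mathbb{S}^{d-1}$. Then $\int_0^\infty\lambda e^{-\lambda s}\mathcal{A}_{d,\alpha}s\,\dx[s]=\mathcal{A}_{d,\alpha}/\lambda$, which is exactly the constant in \eqref{eq:unimodalcaseasympt}. Existence of the densities $p(t,\cdot)$ is standard for isotropic unimodal processes with $\int\nu=\infty$ (they are absolutely continuous for $t>0$). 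Existence of $p_\lambda$ then follows from Theorem~\ref{th:formula}, once we know the integral $\int_0^t\lambda e^{-\lambda s}p(s,y)\,\dx[s]$ is finite for $y\neq 0$; this will follow from the upper bound below.

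\emph{Step 1: pointwise limit.} We need, for each fixed $t>0$,
\[
\lim_{r\to\infty}\frac{p(t,r\theta)}{r^{-d}\psi(1/r)}=\mathcal{A}_{d,\alpha}t .
\]
The idea is a small-scale/large-space limit of the process. Regular variation of $\psi$ at zero with index $\alpha$ means that the rescaled process $Y_{t/\psi(1/r)}/r$ converges in law to the isotropic $\alpha$-stable process at time $t$, normalized as in the stable subsection. We intend to use the known large-$|y|$ results for unimodal processes with regularly varying exponent: the tail $\mP(|Y_t|>r)\sim c\,t\,\psi(1/r)$ for fixed $t$ (Tauberian argument, as in the Bogdan--Grzywny--Ryznar or Cygan--Grzywny--Trojan analysis). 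Unimodality then converts tail asymptotics into density asymptotics via the monotone density theorem. For fixed $t$ and $r\to\infty$ we are in the regime $t\psi(1/r)\to0$. The expected outcome is $p(t,r\theta)\sim t\,\nu(r)$ together with $\nu(r)\sim\mathcal{A}_{d,\alpha}r^{-d}\psi(1/r)$. The latter follows from the relation between $\psi$ and the tail of $\nu$ (Potter bounds, Karamata), with the constant identified by comparison with the exact stable case \eqref{eq:BlumenthalGetoor}.

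\emph{Step 2: uniform domination.} We need $p(t,r\theta-z)\le \tilde A(t)\,r^{-d}\psi(1/r)$ for $r>r_0$ and $z\in\{0,x\}$. We will use the known global bound for unimodal processes with weak scaling,
\[
p(t,y)\le c\min\{[\psi^{-}(1/t)]^d,\ t|y|^{-d}\psi(1/|y|)\},
\]
valid here because regular variation gives weak lower and upper scaling near zero. Near infinity $\psi$ is at least comparable to a bounded-below increasing function, which is enough for the bound with $t|y|^{-d}\psi(1/|y|)$ (a bound due to Bogdan--Grzywny--Ryznar). For $r>2|x|$ we have $|r\theta-z|\ge r/2$, and $\psi(1/|y|)$ is comparable for comparable $|y|$ by Potter bounds. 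This gives $\tilde A(t)=c_1t$, which satisfies $e^{-\lambda t}\tilde A(t)\to0$ and $\int_0^\infty e^{-\lambda t}\tilde A(t)\,\dx[t]<\infty$. Lemma~\ref{lemma:asymptotic} then yields \eqref{eq:unimodalcaseasympt}.

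The main obstacle is Step~1, and in particular obtaining the exact constant $\mathcal{A}_{d,\alpha}$ rather than just comparability. First I would write $p(t,r\theta)$, for small $\varepsilon=t\psi(1/r)$, as the rescaled density of $Y_{\varepsilon/\psi(1/r)}/r$. Then I would use the short-time relation $p(s,\cdot)/s\to\nu$ together with the regular-variation asymptotics of $\nu$. Justifying the required uniformity would rely on unimodality, through monotonicity in $|y|$ sandwiched between annulus averages of the tail.
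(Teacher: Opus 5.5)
Your proposal is correct and is essentially the paper's proof. The paper applies Lemma~\ref{lemma:asymptotic} with exactly your choices $f(r)=r^{-d}\psi(1/r)$, $A(t)=\mathcal{A}_{d,\alpha}t$, $\tilde A(t)=c\,t$, with the domination taken from the Bogdan--Grzywny--Ryznar bound $p(t,z)\le c\,t|z|^{-d}\psi(|z|^{-1})$ (it passes from $y-x$ to $y$ via radial monotonicity and subadditivity of $\sqrt{\psi}$ rather than your Potter bounds, an immaterial difference), and for Step~1 it simply cites Cygan--Grzywny--Trojan, Theorem~4, which gives $p(t,y)/(t|y|^{-d}\psi(|y|^{-1}))\to\mathcal{A}_{d,\alpha}$ as $|y|\to\infty$, $t\psi(|y|^{-1})\to 0$, so your Tauberian/monotone-density reconstruction is not needed (and only its fixed-$t$ version is required by Lemma~\ref{lemma:asymptotic} anyway).
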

    \begin{proof}
        It follows from \cite[Corollary 3 and 7]{Bogdan} that 
	\[
		p(t,z)\leq c_1 t|z|^{-d}\psi\left(|z|^{-1}\right), \quad z\neq0, t>0,
	\]
	for some constant $c_1>0.$ Since $p(t,\cdot)$ is radially non-increasing and $\sqrt{\psi}$ is subadditive, we get
       \[
         p(t,y-x) \leq p(t,y/2)\leq c_2 t |y|^{-d} \psi(|y|^{-1}), \quad t>0,\, |y| > 2|x|.
       \]
    Moreover, in \cite[Theorem 4]{Cygan} it has been shown, that %if $\psi$ is regularly varying of index $\alpha\in(0,2)$ at zero
    %then
	\begin{equation*}
		\lim\limits_{\substack{|y|\to+\infty\\t\psi\left(|y|^{-1}\right)\to0}}\frac{p(t,y)}{|y|^{-d}t\psi\left(|y|^{-1}\right)}=\mathcal{A}_{d,\alpha},
	\end{equation*}
	where $\mathcal{A}_{d,\alpha}$ is defined by formula \eqref{eq:lim_stable}. In particular, since $\psi(y)\xrightarrow{y\to0}0$, we have
	\begin{equation*}
		\lim_{|y|\to\infty}\frac{p(t,y)}{|y|^{-d}\psi\left(|y|^{-1}\right)}=\mathcal{A}_{d,\alpha}t, \quad t>0.
	\end{equation*}
	Thus, %since $Cte^{-\lambda t}$ is integrable, 
        we can use Lemma \ref{lemma:asymptotic} once again to obtain \eqref{eq:unimodalcaseasympt}.
    \end{proof}
	 
\section{Relativistic stable processes}\label{section:RelStP}
Let $m>0$. The relativistic stable process $Y_t$ can be obtained through the subordination procedure, that is, if $B_t$ is a Brownian motion in $\Rd$ and $\theta_t$ is an independent subordinator with transition density $\tilde{\eta}(t,u)=e^{mt}\eta(t,u)e^{-m^{2/\alpha}u}$, where $\eta$ is the transition density of the $\alpha/2$-stable subordinator, then $Y_t = B_{\theta_t}$ (see, e.g., \cite{Ryznar2002}). That is, this process is a pure-jump isotropic unimodal L\'evy process, but $\psi(\xi)=(|\xi|^2+m^{2/\alpha})^{\alpha/2}-m$ varies reguraly at zero with index $2$.

This indicates a following representation of the transition density of $Y_t:$
\begin{equation*}\label{eq:relativistic_density}
    p(t,y)=e^{mt}\int_{0}^{\infty}\left(\frac{1}{4\pi u}\right)^{d/2} e^{-\frac{|y|^2}{4u}}e^{-m^{2/\alpha}u} \eta(t,u) \dx[u], \quad t>0, y\in\Rd.
\end{equation*}
Using \eqref{eq:subord_ineq} and \cite[10.32.10]{NIST} we get
    \begin{equation} \label{eq:reldensest}
      \begin{split}
		p(t,y) & \leq c_1 e^{mt}\int_0^\infty\left(\frac{1}{4\pi u}\right)^{d/2} e^{-\frac{|y|^2}{4u}}e^{-m^{2/\alpha}u} \frac{t}{u^{1+\alpha/2}}\,\dx[u] \\
        & = \frac{c_2 t e^{mt}K_{\frac{d+\alpha}{2}}\left(m^{1/\alpha}|y|\right)}{|y|^{\frac{d+\alpha}{2}}},
        \quad t>0, y\in\Rd\setminus\{ 0 \},
      \end{split}
    \end{equation}
    for some positive constants $c_1, c_2,$ where 
    \begin{align*}
       K_\mu(r) & = \frac12 \left(\frac{r}{2}\right)^\mu \int_0^\infty u^{-\mu-1} \exp\left(-u-\frac{r^2}{4u}\right)\, \dx[u] \\
       &= \frac{1}{2^{\mu+1}} \int_0^\infty s^{-\mu-1} \exp\left(-r\left(s+\frac{1}{4s}\right)\right)\,\dx[s], \quad \mu>0, r>0,
    \end{align*}
    is the modified Bessel function of the second kind.
    The density of the L\'evy measure of $Y_t$ is given by (see \cite[Theorem 30.1]{Sato68})
    \begin{equation}\label{eq:relLevy}
         \begin{split}
             \nu(y) & =  \frac{\alpha}{2(4\pi)^{\frac{d}{2}}\Gamma(1-\frac{\alpha}{2})} \int_0^\infty  e^{-\frac{|y|^2}{4u}} e^{-m^{2/\alpha }u} u^{-1-\frac{d+\alpha}{2}}\, \dx[u] \\
             & = \frac{\alpha m^{\frac{d+\alpha}{2\alpha}}}{2^{\frac{d-\alpha}{2}} \pi^{\frac{d}{2}}\Gamma(1-\frac{\alpha}{2})} \frac{K_{\frac{d+\alpha}{2}}(m^{1/\alpha}|y|)}{|y|^{\frac{d+\alpha}{2}}}, \quad y\in\Rd\setminus\{ 0 \}.
         \end{split}
    \end{equation}

\subsection{\texorpdfstring{$m<\lambda$}{mniejsza}}\label{ex:relativistic}

    \begin{proof}[Proof of Theorem \ref{th:relatproc}, Part 1.]
        It follows from \cite[Theorem 4]{KaletaSztonyk2019} that 
        \[
             \lim_{|z|\to\infty} \frac{p(t,z)}{t\nu(z)} = e^{mt}, \quad t>0,
        \]
        hence, for $f(r)=r^{-\frac{d+\alpha}{2}} K_{\frac{d+\alpha}{2}}\left(m^{1/\alpha}r\right)$ and all $t>0,$ by \eqref{eq:relLevy}, we obtain
    \begin{align*}
		\begin{split}
            \lim_{|z|\to\infty}\frac{p(t,z)}{f(|z|)} &=
            \lim_{|z|\to\infty}\frac{p(t,z)}{\nu(z)}\cdot
            \frac{\nu(z)}{K_{\frac{d+\alpha}{2}}(m^{1/\alpha}|z|)|z|^{-\frac{d+\alpha}{2}}}\phantom{=:A(t)}\\ &=
            te^{mt}\cdot
            \frac{\alpha m^{\frac{d+\alpha}{2\alpha}}}{2^{\frac{d-\alpha}{2}}\pi^{\frac{d}{2}}\Gamma(1-\frac{\alpha}{2})} =:
            A(t).
		\end{split}
	\end{align*}%\label{eq:A_relat}
     Furthermore, we have (see \cite[10.25.3]{NIST})
        \[
		   \lim_{r\to\infty}\frac{K_{\mu}\left(r\right)}{\sqrt\frac{\pi}{2r}e^{-r}}=1,
        \] 
     and it follows from \eqref{eq:reldensest} that $p(t,y-x) \leq c_1 f(|y-x|) A(t) \leq c_2 f(|y|) A(t),$ for some constant $c_1>0,$ $c_2=c_2(x),$ and suitable large $y,$ hence using Lemma~\ref{lemma:asymptotic} again we get
	\begin{align*}
        p_\lambda(t,x,y) & \sim
        |y|^{-\frac{d+\alpha}{2}}K_{\frac{d+\alpha}{2}}\left(m^{1/\alpha}|y|\right) 
        \frac{\lambda \alpha m^{\frac{d+\alpha}{2\alpha}}}{2^{\frac{d-\alpha}{2}}\pi^{\frac{d}{2}}\Gamma(1-\frac{\alpha}{2})(\lambda-m)^2} \\
        & \sim \frac{\lambda\alpha m^{\frac{d+\alpha-1}{2\alpha}}}{2^{\frac{d-\alpha+1}{2}}\pi^{\frac{d-1}{2}}\Gamma(1-\frac{\alpha}{2})(\lambda-m)^2} |y|^{-\frac{d+\alpha+1}{2}} e^{-m^{1/\alpha}|y|}.
    \end{align*}
   \end{proof}

 \subsection{\texorpdfstring{$m=\lambda$}{rowny}}\label{ex:relativistic_m=l}

 In this case we use the standard Laplace method of estimating the integrals which we recall here without proof.
\begin{lemma}[Generalized Laplace Method]\label{thm:laplace}
     Let $-\infty<a<b<\infty$, and $f\in C^2((a,b))$ with unique global maximum at $x_0\in(a,b),$ $h(x)\in C((a,b))$ and $h(x)\geq 0$ for $x\in(a,b)$. If $e^{f(x)}h(x)$ is integrable on $(a,b)$, then
     \begin{equation*}
        \int_a^b h(x)e^{rf(x)} \dx \sim_r h(x_0)\sqrt{\frac{2\pi}{r|f^{\prime\prime}(x_0)|}} e^{rf(x_0)}.
    \end{equation*}
\end{lemma}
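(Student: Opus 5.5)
The plan is the classical Laplace argument: localize the integral near $x_0$, replace $f$ by its second-order Taylor polynomial there, and show that everything away from $x_0$ is exponentially negligible compared with $e^{rf(x_0)}/\sqrt r$.

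We need the implicit hypothesis $f''(x_0)<0$; presumably it is intended, since otherwise $|f''(x_0)|=0$ and the formula is meaningless. We also need $h(x_0)>0$, or else read the statement with the convention that the error is $o(r^{-1/2}e^{rf(x_0)})$. Since $(a,b)$ is bounded and only integrability of $e^{f}h$ is assumed, all estimates are carried out for $r\ge 1$ using $e^{rf}=e^{f}e^{(r-1)f}$.

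\textbf{Step 1 (local part).} Fix $\varepsilon\in(0,1)$. By continuity of $h$ and $f''$ at $x_0$, and since $f'(x_0)=0$, choose $\delta>0$ such that on $[x_0-\delta,x_0+\delta]\subset(a,b)$ we have $|h(x)-h(x_0)|\le\varepsilon h(x_0)$ and
\[
-\tfrac12(1+\varepsilon)|f''(x_0)|(x-x_0)^2\le f(x)-f(x_0)\le -\tfrac12(1-\varepsilon)|f''(x_0)|(x-x_0)^2 .
\]
Substitute $x=x_0+v/\sqrt r$ and compare with the Gaussian integrals $\int_{\mathbb R}e^{-c v^2/2}\,\dx[v]=\sqrt{2\pi/c}$. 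The integrals over $|v|>\delta\sqrt r$ are exponentially small. Hence the local contribution equals
\[
h(x_0)\sqrt{\frac{2\pi}{r|f''(x_0)|}}\,e^{rf(x_0)}\,(1+O(\varepsilon))
\]
for all large $r$.

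\textbf{Step 2 (tails).} This is the main obstacle. Uniqueness of the global maximum alone does not prevent $f$ from approaching $f(x_0)$ near the endpoints $a$ or $b$, because $f$ is only defined on the open interval. So we need
\[
\eta:=f(x_0)-\sup_{|x-x_0|\ge\delta}f(x)>0 .
\]
On compact subintervals this follows from continuity and uniqueness. Near the endpoints I would add, or interpret the uniqueness as including, the condition $\limsup_{x\to a^+,\,b^-}f(x)<f(x_0)$; in the applications (for example $f(u)=-(\lambda u+\frac1{2u})$ on a bounded interval) this is clear.

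Given $\eta>0$, the tail is controlled by
\[
\int_{|x-x_0|\ge\delta}h e^{rf}\le e^{(r-1)(f(x_0)-\eta)}\int_a^b he^{f} = O\!\left(e^{rf(x_0)}e^{-r\eta}\right),
\]
which is $o(r^{-1/2}e^{rf(x_0)})$.

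\textbf{Step 3 (conclusion).} Combining Steps 1 and 2 and then letting $\varepsilon\to0$ gives the claimed asymptotic relation $\sim_r$.
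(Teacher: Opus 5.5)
The paper gives no proof of this lemma (it is quoted as the classical Laplace method), so there is no argument of the paper's to compare against. Your localisation--Taylor--tail argument is the standard proof and it is correct.

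You are also right that the statement as printed needs the hypotheses you add. The conditions $f''(x_0)<0$ and $h(x_0)>0$ are required for the right-hand side to be a meaningful asymptotic. The endpoint condition $\limsup_{x\to a^+}f(x)<f(x_0)$, $\limsup_{x\to b^-}f(x)<f(x_0)$ genuinely cannot be dropped. For example, on $(-1,1)$ take $h\equiv 1$ and $f(x)=-x^2(1-x)^4$. Then $f$ has a unique global maximum at $0$ with $f''(0)=-2$, yet
$\int_{-1}^{1}e^{rf(x)}\,\mathrm{d}x\ge\int_0^{1/2}e^{-ru^4}\,\mathrm{d}u\sim\Gamma(5/4)\,r^{-1/4}$, which is $\gg r^{-1/2}$. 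So the printed lemma is false without your extra assumption.

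All three places where the paper invokes the lemma satisfy your strengthened hypotheses:
\begin{itemize}
\item the Brownian case, with $-(\lambda u+\frac{1}{2u})$ on $(0,M)$;
\item the window $|s-s_0|<\varepsilon$ in Lemma \ref{lm:intasym};
\item the Gaussian weight $-\frac12 m^{1/\alpha}w^2$ on $|w|<\varepsilon$ in Part 3 of Theorem \ref{th:relatproc}.
\end{itemize}
Your version is therefore exactly what the paper needs.
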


%Later we will use the saddle point type method for estimations. Let us recall firstly important Theorem.

\begin{lemma}\label{lm:intasym}
    Let $a, b >0$ and $f(s) = \frac{a}{s} + b s,\, s\in (0,\infty).$ Let $\gamma > 0,$ and a positive
    function $g\in C_b(\R_+)$ be such that for some $\kappa\in [0,\infty),$ and $g_0, g_\infty \in (0,\infty)$ we have
    \begin{equation}\label{eq:g_assum}
        \lim_{s\to 0^+} \frac{g(s)}{s^{\kappa}} = g_0, \quad \lim_{s \to \infty} g(s) = g_\infty.
    \end{equation}
    Then
    \begin{equation}\label{eq:CorLapl}
       \int_0^\infty s^{-\gamma}e^{-rf(s)}g\left(\frac{t^{2/\alpha}}{rs}\right)\, \dx[s]
       \sim \sqrt{\frac{\pi}{a}}\left(\frac{a}{b}\right)^{\frac34 - \frac{\gamma}{2}} \frac{e^{-2r\sqrt{ab}}}{\sqrt{r}} g\left(\frac{t^{2/\alpha}\sqrt{b}}{r\sqrt{a}}\right).
    \end{equation}
    Furthermore, the above asymptotic equality holds also for
    any positive function $g\in C_b(\R_+)$ such that $\lim_{s\to 0^+} \frac{g(s)}{s^\kappa} = g_0$ 
    (without assuming the existence of a positive limit in infinity) provided $\frac{t}{r^{\alpha/2}} <M$ for some constant $M>0.$
    
\end{lemma}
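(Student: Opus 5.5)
The proof will be a direct application of Laplace's method to the phase $-rf(s)$, combined with a uniformity argument that handles the moving argument of $g$. Put $s_0=\sqrt{a/b}$. Then $f$ attains its unique minimum at $s_0$, with $f(s_0)=2\sqrt{ab}$ and $f''(s_0)=2a/s_0^3=2a(b/a)^{3/2}$. The model computation with $g\equiv 1$ (Lemma \ref{thm:laplace}) gives
\[
s_0^{-\gamma}\sqrt{\frac{2\pi}{rf''(s_0)}}\,e^{-2r\sqrt{ab}}=\sqrt{\frac{\pi}{a}}\Big(\frac ab\Big)^{\frac34-\frac\gamma2}\frac{e^{-2r\sqrt{ab}}}{\sqrt r},
\]
which is exactly the prefactor in \eqref{eq:CorLapl}. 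Write $\Lambda=\Lambda(t,r)=t^{2/\alpha}/r$ and $u=s_0/s$, so that $g(\Lambda/s)=g((\Lambda/s_0)u)$. The aim is to show that the factor $g(\Lambda/s)$ may be replaced by $g(\Lambda/s_0)$, uniformly in $\Lambda$, at the cost of a factor $1+o(1)$.

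\textbf{Step 1: local uniform ratio estimate.} With $\mu=\Lambda/s_0$, I will show
\[
\omega(\delta):=\sup_{\mu>0}\ \sup_{|u-1|\le\delta}\Big|\frac{g(\mu u)}{g(\mu)}-1\Big|\xrightarrow{\delta\to0}0 .
\]
The argument splits $\mu$ into three ranges.
\begin{itemize}
\item For $\mu\in[\varepsilon,K]$, this follows from the uniform continuity of $g$ on $[\varepsilon/2,2K]$ together with $\inf_{[\varepsilon/2,2K]}g>0$.
\item For $\mu<\varepsilon$, the assumption $g(s)\sim g_0s^\kappa$ gives $g(\mu u)/g(\mu)=u^\kappa(1+o(1))$ uniformly.
\item For $\mu>K$, both $g(\mu u)$ and $g(\mu)$ are close to $g_\infty>0$.
\end{itemize}

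\textbf{Step 2: global domination.} I will prove a bound $g(\mu u)/g(\mu)\le C(1+u^\kappa)$ for all $\mu,u>0$.
\begin{itemize}
\item If $\mu\ge1$, the ratio is at most $\sup g/\inf_{[1,\infty)}g<\infty$, and this infimum is positive because $g_\infty>0$.
\item If $\mu<1$ and $\mu u\le 1$, then $g(\mu u)/g(\mu)\approx u^\kappa$ by the behaviour of $g$ at zero, since $g(s)\asymp s^\kappa$ on $(0,1]$.
\item If $\mu<1$ and $\mu u>1$, then $g(\mu u)\le \sup g\le C(\mu u)^\kappa$ while $g(\mu)\ge c\mu^\kappa$, so the ratio is again at most $Cu^\kappa$.
\end{itemize}

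\textbf{Step 3: splitting the integral.} Split the integral at $|s-s_0|\le r^{-1/2+\epsilon}$. On this window, Step 1 replaces $g(\Lambda/s)$ by $g(\Lambda/s_0)(1+o(1))$, and the usual Laplace expansion gives the main term. Off the window, Step 2 bounds the integrand by
\[
Cg(\Lambda/s_0)\,s^{-\gamma}\big(1+(s_0/s)^{\kappa}\big)e^{-rf(s)} .
\]
Here $rf(s)-2r\sqrt{ab}\ge c\,r^{2\epsilon}$ away from $s_0$. Near $s=0$ the term $e^{-ra/s}$ absorbs the power $s^{-\gamma-\kappa}$, and for large $s$ the term $e^{-rbs}$ handles integrability. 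Hence the off-window part is $o\big(g(\Lambda/s_0)e^{-2r\sqrt{ab}}/\sqrt r\big)$, uniformly in $t$. This proves \eqref{eq:CorLapl}.

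\textbf{Second assertion.} If $g_\infty$ is not assumed but $t/r^{\alpha/2}<M$, then $\Lambda<M^{2/\alpha}$, so $\mu$ ranges only over a bounded interval $(0,K]$. Step 1 then needs only the small-$\mu$ and compact cases, and Step 2 only the cases with $\mu<K$, where boundedness of $g$ suffices for $\mu u$ large. So the same proof applies.

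The main obstacle is the uniformity in $\Lambda$ in Steps 1 and 2. This is exactly where both hypotheses in \eqref{eq:g_assum} (or, in the second assertion, the bound on $t/r^{\alpha/2}$) are used.
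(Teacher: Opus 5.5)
Your argument is correct. It shares with the paper the key local step: your Step 1 is exactly the paper's three-regime control of $g(\Lambda/s)/g(\Lambda/s_0)$ near $s_0$. It differs in how the tails and the second assertion are handled.

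\textbf{The paper's route.} It fixes an $\varepsilon$-window around $s_0$. It shows the complement is negligible \emph{relative to the window integral}, using only $\|g\|_\infty$ off the window and $\inf_{|s-s_0|<\varepsilon/2}g(\Lambda/s)\gtrsim(1\wedge\Lambda)^\kappa$ on it, where $\Lambda=t^{2/\alpha}/r$. This is where $t\to\infty$ is used: it gives $\Lambda\geq 1/r$, so the lower bound is only polynomially small against $e^{-cr}$. The paper then quotes Lemma~\ref{thm:laplace} on the window and lets $\varepsilon\to0$. For the second assertion it truncates $g$ to a $\tilde g$ that is constant beyond $M^{2/\alpha}/\delta$ and applies the first part to $\tilde g$. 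It then shows $g\neq\tilde g$ only for $s<\delta$, where the integrand is exponentially smaller than the main term.

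\textbf{What your route buys.} Your global domination $g(\mu u)\le C(1+u^\kappa)g(\mu)$ replaces the $\|g\|_\infty/\inf g$ comparison. It bounds the tails directly by $g(\Lambda/s_0)$ times an exponentially small factor. This gives uniformity in $t>0$: the asymptotics hold as $r\to\infty$ alone, which is slightly more than stated. It also makes the second assertion immediate. On $\mu\in(0,K]$, positivity of $\inf_{[1,K]}g$ substitutes for $g_\infty>0$, so no truncation is needed.

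\textbf{The cost.} You must do the Laplace step on your shrinking window by hand rather than quoting Lemma~\ref{thm:laplace}. If you do it via a Taylor expansion, you need $0<\epsilon<1/6$, so that $r\cdot r^{3(\epsilon-1/2)}\to0$. Alternatively, take the window integral as the full integral of $s^{-\gamma}e^{-rf(s)}$ minus your already-controlled tails.
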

\begin{proof}
    The function $f$ is continuous on $(0,\infty)$ with global
    minimum at $s_0 = \sqrt{\frac{a}{b}}.$ We observe also that
    \begin{equation}\label{eq:fbeh}
        f(s_0-h) > f(s_0+h)  > f(s_0-h/2),
    \end{equation}
    for all $h\in (0,s_0).$
    Thus, for every $\varepsilon \in (0,s_0),$ we have
    \begin{align}\label{eq:IntEst1}
        \begin{split}
           \int_{|s-s_0|<\varepsilon} s^{-\gamma}e^{-rf(s)} g\left(\frac{t^{2/\alpha}}{rs}\right)\, \dx[s] & \geq 
           \int_{|s-s_0|<\varepsilon/2} (s_0+\varepsilon/2)^{-\gamma}e^{-rf(s_0-\varepsilon/2)} \, \dx[s] 
           \inf_{|s-s_0|<\varepsilon/2} g\left(\frac{t^{2/\alpha}}{rs}\right) \\
           & = \varepsilon (s_0+\varepsilon/2)^{-\gamma}e^{-rf(s_0-\varepsilon/2)} \inf_{|s-s_0|<\varepsilon/2} g\left(\frac{t^{2/\alpha}}{rs}\right).
        \end{split}
    \end{align}
    We note that from the integrability of $s^{-\gamma}e^{-f(s)}$ it follows that there exists $c_1>s_0$ such that
    \[
       \int_{s_0+\varepsilon}^{c_1} s^{-\gamma}e^{-f(s)} \, \dx[s]
       \geq \int_{c_1}^\infty  s^{-\gamma}e^{-f(s)} \, \dx[s].
    \]
    Then, for every $r>1$ we get
    \begin{align*}
       \int_{s_0+\varepsilon}^{c_1} s^{-\gamma}e^{-rf(s)} \, \dx[s] & \geq e^{-(r-1)f(c_1)} \int_{s_0+\varepsilon}^{c_1} s^{-\gamma}e^{-f(s)} \, \dx[s] 
       \geq e^{-(r-1)f(c_1)} \int_{c_1}^\infty s^{-\gamma}e^{-f(s)} \, \dx[s] \\
       & \geq \int_{c_1}^\infty s^{-\gamma}e^{-rf(s)} \, \dx[s],
    \end{align*}
    hence
    \begin{align}\label{eq:IntEst2}
      \begin{split}
        \int_{s_0+\varepsilon}^{\infty} s^{-\gamma}e^{-rf(s)} g\left(\frac{t^{2/\alpha}}{rs}\right)\, \dx[s] &
        \leq 2\|g\|_\infty \int_{s_0+\varepsilon}^{c_1} s^{-\gamma}e^{-rf(s)} \dx[s] \\
        & \leq 2c_1 \|g\|_\infty (s_0+\varepsilon)^{-\gamma}e^{-rf(s_0+\varepsilon)},
      \end{split}
    \end{align}
    where, as usual, $\|g\|_\infty = \sup_{s\in (0,\infty)} g(s).$ Furthermore, the function $s\to s^{-\gamma}e^{-rf(s)}$ is increasing on $\left(0,\frac{\sqrt{\gamma^2+4r^2ab}-\gamma}{2rb}\right],$ hence we have
    \begin{align}\label{eq:IntEst3}
      \begin{split}
        \int_{0}^{s_0-\varepsilon} s^{-\gamma}e^{-rf(s)} g\left(\frac{t^{2/\alpha}}{rs}\right)\, \dx[s]
        & \leq \| g\|_\infty (s_0-\varepsilon) \tilde{s}^{-\gamma} e^{-rf(\tilde{s})},
      \end{split}
    \end{align}
    where $\tilde{s}=\min\left\{s_0-\varepsilon,\frac{\sqrt{\gamma^2+4r^2ab}-\gamma}{2rb} \right\}.$
    There exists $r_0=r_0(a,b,\varepsilon,\gamma)$ such that $\tilde{s}=s_0-\varepsilon$ for all $r>r_0,$ and from \eqref{eq:IntEst1}, \eqref{eq:IntEst2}, \eqref{eq:IntEst3} and \eqref{eq:fbeh} we get
    \begin{align*}
        \frac{\int_{|s-s_0|>\varepsilon} s^{-\gamma}e^{-rf(s)} g\left(\frac{t^{2/\alpha}}{rs}\right)\, \dx[s]}{\int_{{|s-s_0| <\varepsilon}} s^{-\gamma}e^{-rf(s)} g\left(\frac{t^{2/\alpha}}{rs}\right)\, \dx[s]}
        & \leq \frac{c_2 \|g\|_\infty e^{-rf(s_0+\varepsilon)}}{ e^{-rf(s_0-\varepsilon/2)}\inf\limits_{|s-s_0|<\varepsilon/2} g\left(\frac{t^{2/\alpha}}{rs}\right)} \\
        & = \frac{c_2 \|g\|_\infty e^{-r(f(s_0+\varepsilon)-f(s_0-\varepsilon/2))}}{\inf\limits_{|s-s_0|<\varepsilon/2} g\left(\frac{t^{2/\alpha}}{rs}\right)},
    \end{align*}
    for all $r>\max\{1,r_0\},$ with some constant $c_2=c_2(a,b,\varepsilon,\gamma).$
    It follows from \eqref{eq:g_assum} that there exists $\delta>0$ such that $g(s) \geq \frac{g_0}{2} s^\kappa$ for $s\in (0,\delta),$ hence, if $\frac{t^{2\alpha}}{r} \leq \delta (s_0-\varepsilon/2)$ then $\inf\limits_{|s-s_0|<\varepsilon/2} g\left(\frac{t^{2/\alpha}}{rs}\right) \geq \frac12 g_0 (s_0+\varepsilon/2)^{-\kappa} (t^{2/\alpha}/r)^\kappa.$ If $\frac{t^{2\alpha}}{r} > \delta (s_0-\varepsilon/2)$ then $\inf\limits_{|s-s_0|<\varepsilon/2} g\left(\frac{t^{2/\alpha}}{rs}\right) \geq \inf\limits_{u > \frac{\delta(s_0-\varepsilon/2)}{s_0+\varepsilon/2}} g\left(u\right) > 0$.
    This and \eqref{eq:fbeh} yields
    \[
      \lim\limits_{\substack{r\to\infty\\t\to\infty}} \frac{\int_{|s-s_0|>\varepsilon} s^{-\gamma}e^{-rf(s)} g\left(\frac{t^{2/\alpha}}{rs}\right)\, \dx[s]}{\int_{{|s-s_0| <\varepsilon}} s^{-\gamma}e^{-rf(s)} g\left(\frac{t^{2/\alpha}}{rs}\right)\, \dx[s]} = 0,
    \]
    and
    \[
      \lim\limits_{\substack{r\to\infty\\t\to\infty}} \frac{\int_0^\infty s^{-\gamma}e^{-rf(s)} g\left(\frac{t^{2/\alpha}}{rs}\right)\, \dx[s]}{\int_{{|s-s_0| <\varepsilon}} s^{-\gamma}e^{-rf(s)} g\left(\frac{t^{2/\alpha}}{rs}\right)\, \dx[s]} = 1,
    \]
    for every $\varepsilon \in (0,s_0).$

    It follows from the Generalized Laplace Method (Lemma \ref{thm:laplace}) that
    \[
      \int_{{|s-s_0| <\varepsilon}} s^{-\gamma}e^{-rf(s)} \, \dx[s]
      \sim_r
      s_0^{-\gamma} \sqrt{\frac{2\pi}{r|f''(s_0)|}}e^{-rf(s_0)},
    \]
    hence
    \begin{equation}\label{eq:limsupcomp}
      \limsup\limits_{\substack{r\to\infty\\t\to\infty}}
      \frac{\int_0^\infty s^{-\gamma}e^{-rf(s)} g\left(\frac{t^{2/\alpha}}{rs}\right)\, \dx[s]}{g\left(\frac{t^{2/\alpha}}{rs_0}\right) s_0^{-\gamma} \sqrt{\frac{2\pi}{r|f''(s_0)|}}e^{-rf(s_0)}}
      \leq \limsup\limits_{\substack{r\to\infty\\t\to\infty}} \frac{\sup\limits_{|s-s_0|<\varepsilon} g\left(\frac{t^{2/\alpha}}{rs}\right)}{g\left(\frac{t^{2/\alpha}}{rs_0}\right)}
    \end{equation}
    and similarly,
    \begin{align*}
      \liminf\limits_{\substack{r\to\infty\\t\to\infty}}
      \frac{\int_0^\infty s^{-\gamma}e^{-rf(s)} g\left(\frac{t^{2/\alpha}}{rs}\right)\, \dx[s]}{g\left(\frac{t^{2/\alpha}}{rs_0}\right) s_0^{-\gamma} \sqrt{\frac{2\pi}{r|f''(s_0)|}}e^{-rf(s_0)}}
      & \geq \liminf\limits_{\substack{r\to\infty\\t\to\infty}} \frac{\inf\limits_{|s-s_0|<\varepsilon}g\left(\frac{t^{2/\alpha}}{rs}\right)}{g\left(\frac{t^{2/\alpha}}{rs_0}\right)}.
    \end{align*}
    For every $\delta>0,$ using \eqref{eq:g_assum}, we choose $\eta>0$ such that $1-\delta < \frac{g(u)}{g_0 u^\kappa}<1+\delta$ for $u\in (0,\eta).$ We get
    \[
      \frac{\sup\limits_{|s-s_0|<\varepsilon} g\left(\frac{t^{2/\alpha}}{rs}\right)}{g\left(\frac{t^{2/\alpha}}{rs_0}\right)} \leq \left(\frac{s_0}{s_0-\varepsilon}\right)^\kappa \frac{1+\delta}{1-\delta},
    \]
    provided $\frac{t^{2/\alpha}}{r} \leq \frac12 s_0\eta$ and
    $\varepsilon \in (0,\frac12 s_0).$

    Similarly, since $\lim_{s\to\infty} g(s) = g_\infty,$
    we can choose $C>0$ such that 
    \begin{equation}\label{eq:gests}
       \frac{\sup\limits_{|s-s_0|<\varepsilon} g\left(\frac{t^{2/\alpha}}{rs}\right)}{g\left(\frac{t^{2/\alpha}}{rs_0}\right)} \leq 1+\delta,
    \end{equation}
    provided $\frac{t^{2/\alpha}}{r} \geq s_0 C.$
 
    The function $g$ is uniformly continuous on $[\frac13\eta,2C]$ for all $C>\eta >0.$ Therefore, we can choose $\varepsilon_0 = \varepsilon_0(g,s_0,\delta)>0$ such that
    \eqref{eq:gests} holds also for $\frac12 s_0\eta < \frac{t^{2/\alpha}}{r} <s_0C,$ provided $\varepsilon < \varepsilon_0.$
    
    We obtain
    \[
       \limsup\limits_{\substack{r\to\infty\\t\to\infty}}\frac{\sup\limits_{|s-s_0|<\varepsilon} g\left(\frac{t^{2/\alpha}}{rs}\right)}{g\left(\frac{t^{2/\alpha}}{rs_0}\right)} \leq \left(\frac{s_0}{s_0-\varepsilon}\right)^{\kappa}\frac{1+\delta}{1-\delta}, \quad \varepsilon<\varepsilon_0,
    \]
    and since $\delta>0$ is arbitrary, by \eqref{eq:limsupcomp} we have
    \[
       \limsup\limits_{\substack{r\to\infty\\t\to\infty}}
      \frac{\int_0^\infty s^{-\gamma}e^{-rf(s)} g\left(\frac{t^{2/\alpha}}{rs}\right)\, \dx[s]}{g\left(\frac{t^{2/\alpha}}{rs_0}\right) s_0^{-\gamma} \sqrt{\frac{2\pi}{r|f''(s_0)|}}e^{-rf(s_0)}} \leq 1.
    \]
    Similarly, we obtain
    \[
       \liminf\limits_{\substack{r\to\infty\\t\to\infty}}
      \frac{\int_0^\infty s^{-\gamma}e^{-rf(s)} g\left(\frac{t^{2/\alpha}}{rs}\right)\, \dx[s]}{g\left(\frac{t^{2/\alpha}}{rs_0}\right) s_0^{-\gamma} \sqrt{\frac{2\pi}{r|f''(s_0)|}}e^{-rf(s_0)}} \geq 1,
    \]
    and \eqref{eq:CorLapl} follows by computing
    $f(s_0) = 2\sqrt{ab},$ and $f''(s_0) = 2a\left(\frac{a}{b}\right)^{-3/2}.$

    Now, assume $\frac{t}{r^{\alpha/2}}<M$ for some $M>0$
    and $g\in C_b(\R_+),$ $g>0$ is such that $\lim_{s\to 0^+} \frac{g(s)}{s^\kappa} = g_0.$ For $\delta>0$ we define
    \[
      \tilde{g}(s) = \left\{
        \begin{array}{cc}
           g(s)  &  \text{if }\,\, s\leq M^{2/\alpha}/\delta,\\
           g(M^{2/\alpha}/\delta)  & \text{if }\,\, s\geq M^{2/\alpha}/\delta. 
        \end{array}
      \right.
    \] 
    Of course, we have
    \[
       \lim_{s\to 0^+} \frac{\tilde{g}(s)}{s^{\kappa}} = g_0 > 0, \quad \lim_{s\to \infty} \tilde{g}(s) = g(M^{2/\alpha}/\delta) > 0,
    \]
    so we can use \eqref{eq:CorLapl} for 
    $\tilde{g}.$
    In particular, for some constant $c_1=c_1(a,b)>0,$ we have
    \[
       \int_0^\infty s^{-\gamma} e^{-rf(s)} \tilde{g}\left(\frac{t^{2/\alpha}}{rs}\right)\dx[s] \geq c_1 
       \frac{e^{-2r\sqrt{ab}}}{\sqrt{r}} \tilde{g}\left(\frac{t^{2/\alpha}\sqrt{b}}{r\sqrt{a}}\right),
    \]
    for sufficiently large $r$ and $t.$
    
    We recall that $s\to s^{-\gamma}e^{-rf(s)}$ is
    increasing on $(0,s_1],$ where \\
 $s_1 = \frac{\sqrt{\gamma^2+4r^2ab}-\gamma}{2rb},$ hence if $\delta< \frac12 \sqrt{\frac{a}{b}}=\frac12 s_0$ and $r$ is such that $s_1 > \frac12 \sqrt{\frac{a}{b}}=\frac12 s_0,$ then we have
    \begin{align*}
       &\frac{\int_0^\infty s^{-\gamma} e^{-rf(s)} \left| g\left(\frac{t^{2/\alpha}}{rs}\right) - \tilde{g}\left(\frac{t^{2/\alpha}}{rs}\right)\right|\dx[s]}{\int_0^\infty s^{-\gamma} e^{-rf(s)} \tilde{g}\left(\frac{t^{2/\alpha}}{rs}\right)\dx[s]} 
        \leq  \frac{\int_0^{\delta} s^{-\gamma} e^{-rf(s)} \left| g\left(\frac{t^{2/\alpha}}{rs}\right) - \tilde{g}\left(\frac{t^{2/\alpha}}{rs}\right)\right|\dx[s]}{c_1 
       e^{-2r\sqrt{ab}}r^{-1/2} \tilde{g}\left(\frac{t^{2/\alpha}\sqrt{b}}{r\sqrt{a}}\right)} \\
       &\leq  \frac{\delta^{-\gamma} e^{-rf(\delta)} \int_0^{\delta} \left| g\left(\frac{t^{2/\alpha}}{rs}\right) - \tilde{g}\left(\frac{t^{2/\alpha}}{rs}\right)\right|\dx[s]}{c_1 
       e^{-2r\sqrt{ab}}r^{-1/2} \tilde{g}\left(\frac{t^{2/\alpha}\sqrt{b}}{r\sqrt{a}}\right)} 
       \leq  \frac{2\| g\|_\infty \delta^{-\gamma+1} e^{-rf(\delta)}}{c_1 
       e^{-2r\sqrt{ab}}r^{-1/2} \tilde{g}\left(\frac{t^{2/\alpha}\sqrt{b}}{r\sqrt{a}}\right)}.
    \end{align*}
    This quotient tends to $0$ as
    $t,r \to \infty$ because $\tilde{g}\left(\frac{t^{2/\alpha}\sqrt{b}}{r\sqrt{a}}\right) \geq c_2\left( 1 \wedge \frac{t^{2/\alpha}}{r}\right)^\kappa$ for all $t,r>0$ and $f(\delta)>2\sqrt{ab}$ for all $\delta\neq \sqrt{\frac{a}{b}}.$

    This yields
     \[
       \lim\limits_{\substack{r\to\infty, t\to\infty \\ t/r^{\alpha/2}<M}} \frac{\int_0^\infty s^{-\gamma} e^{-rf(s)} g\left(\frac{t^{2/\alpha}}{rs}\right)\, \dx[s]}{\int_0^\infty s^{-\gamma} e^{-rf(s)} \tilde{g}\left(\frac{t^{2/\alpha}}{rs}\right)\,\dx[s]} = 1,\\
    \]
    hence,
    \[
      \int_0^\infty s^{-\gamma} e^{-rf(s)} g\left(\frac{t^{2/\alpha}}{rs}\right) \dx[s]
      \sim 
      \sqrt{\frac{\pi}{a}} \left(\frac{a}{b}\right)^{\frac34 - \frac{\gamma}{2}}\frac{e^{-2r\sqrt{ab}}}{\sqrt{r}} \tilde{g}\left(\frac{t^{2/\alpha}\sqrt{b}}{r\sqrt{a}}\right)
    \]
    and the Lemma follows by the observation
    that $\tilde{g}\left(\frac{t^{2/\alpha}\sqrt{b}}{r\sqrt{a}}\right) = g\left(\frac{t^{2/\alpha}\sqrt{b}}{r\sqrt{a}}\right)$
    for $\delta<\frac12 \sqrt{\frac{a}{b}}$ and $\frac{t}{r^{\alpha/2}}\leq M.$
\end{proof}

In the following Corollary we obtain the asymptotic behavior of
$p$ which is crucial for investigations of $p_\lambda.$ 
%The presence of the factor $e^{mt}$ makes it much more difficult in the case $m\geq \lambda.$

\begin{corollary}\label{reldensprop}
    
    If $\frac{t}{|y|^{\alpha/2}}<M$ for some $M>0$, then
    \begin{equation}\label{eq:reldensasym}
        p(t,y)\sim \left(\frac{m^{1/\alpha}}{2\pi}\right)^{\frac{d-1}{2}} e^{mt-m^{1/\alpha}|y|} |y|^{-\frac{d+1}{2}}\eta\left(\dfrac{t}{|y|^{\alpha/2}}\sqrt{2^\alpha m},1\right).
    \end{equation}
\end{corollary}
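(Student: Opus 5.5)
We start from the subordination representation
\[
p(t,y)=e^{mt}\int_0^\infty (4\pi u)^{-d/2} e^{-\frac{|y|^2}{4u}-m^{2/\alpha}u}\,\eta(t,u)\,\dx[u]
\]
and rewrite the $\eta(t,\cdot)$ factor through the scaling property of the $\alpha/2$-stable subordinator. For $\beta=\alpha/2$, scaling gives $\eta(t,u)=u^{-1}\,\eta(t u^{-\alpha/2},1)$; indeed $\eta(t,u)=t^{-2/\alpha}\eta(1,ut^{-2/\alpha})$, and $\eta(1,\cdot)$ can be converted to the form $\eta(\cdot,1)$ by scaling once more. We then substitute $u=rs$ with $r=|y|$. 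This gives
\[
p(t,y)=e^{mt}(4\pi)^{-d/2} r^{-d/2}\int_0^\infty s^{-d/2-1} e^{-r\left(\frac{1}{4s}+m^{2/\alpha}s\right)}\,\eta\Big(\frac{t}{(rs)^{\alpha/2}},1\Big)\,\dx[s].
\]
This has exactly the structure of Lemma \ref{lm:intasym}, with $a=\tfrac14$, $b=m^{2/\alpha}$ and $\gamma=\tfrac d2+1$. The only mismatch is that the argument of $g$ in the lemma is $t^{2/\alpha}/(rs)$. We therefore set $g(v)=\eta(v^{\alpha/2},1)$, so that $g\big(t^{2/\alpha}/(rs)\big)=\eta\big(t/(rs)^{\alpha/2},1\big)$.

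Next we verify the hypotheses on $g$. It is positive and continuous, and it is bounded because $\eta(\tau,1)=\tau^{-2/\alpha}\eta(1,\tau^{-2/\alpha})$ is bounded as $\tau\to\infty$ (since $\eta(1,\cdot)$ is bounded) and as $\tau\to0$ (by \eqref{eq:subord_ineq}). Its small-argument behaviour follows from \eqref{eq:limitsubLev}: $\eta(\tau,1)\sim\mathcal{C}_{\alpha/2}\tau$ as $\tau\to0^+$, so $g(v)\sim \mathcal{C}_{\alpha/2}v^{\alpha/2}$, giving $\kappa=\alpha/2$ and $g_0=\mathcal{C}_{\alpha/2}$. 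At infinity $g(v)\to0$, so there is no positive limit. This is exactly why the statement assumes $t/|y|^{\alpha/2}<M$: it puts us in the second part of Lemma \ref{lm:intasym}, which needs only the behaviour at $0$.

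Applying that part of the lemma, $s_0=\sqrt{a/b}=\tfrac12 m^{-1/\alpha}$ and $2r\sqrt{ab}=m^{1/\alpha}r$, which produces the factor $e^{-m^{1/\alpha}|y|}$. The $g$ factor is evaluated at
\[
\frac{t^{2/\alpha}\sqrt b}{r\sqrt a}=\frac{2m^{1/\alpha}t^{2/\alpha}}{r},
\qquad\text{so}\qquad
g=\eta\big(t\,(2m^{1/\alpha})^{\alpha/2}r^{-\alpha/2},1\big)=\eta\Big(\frac{t}{|y|^{\alpha/2}}\sqrt{2^\alpha m},1\Big),
\]
which matches \eqref{eq:reldensasym}. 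The remaining work is to collect constants. The prefactor is $(4\pi)^{-d/2}r^{-d/2}\sqrt{\pi/a}\,(a/b)^{3/4-\gamma/2}r^{-1/2}$ with $(a/b)^{1/2}=\tfrac12m^{-1/\alpha}$ and $3/4-\gamma/2=\tfrac14-\tfrac d4$. The power of $r$ is $r^{-(d+1)/2}$, and the numerical constant should simplify to $\big(m^{1/\alpha}/(2\pi)\big)^{(d-1)/2}$; this is a routine check.

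The main obstacle is the first step: getting the scaling identity exactly right, so that the integrand really has the form $g\big(t^{2/\alpha}/(rs)\big)$ with $s^{-\gamma}$ and the correct $\gamma$, including the extra $u^{-1}$ Jacobian. A second point to watch is the regime $t$ bounded versus $t\to\infty$. The lemma is stated with $r,t\to\infty$, but its proof only uses lower bounds on $g$ of the form $c\,(1\wedge t^{2/\alpha}/r)^\kappa$, so I would remark that the argument is unaffected.
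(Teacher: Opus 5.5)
Your proposal is correct and follows the paper's proof: you rewrite the subordination formula via $\eta(t,u)=u^{-1}\eta(tu^{-\alpha/2},1)$ and $u=|y|s$, take $g(v)=\eta(v^{\alpha/2},1)$ with $\kappa=\alpha/2$, $g_0=\mathcal{C}_{\alpha/2}$, and apply the second part of Lemma \ref{lm:intasym} (needed because $g(v)\to 0$ at infinity) with $a=\frac14$, $b=m^{2/\alpha}$, $\gamma=1+\frac d2$. Your constant bookkeeping also checks out, giving $\bigl(m^{1/\alpha}/(2\pi)\bigr)^{(d-1)/2}|y|^{-(d+1)/2}$, and your verification that $g$ is bounded and positive fills in details the paper leaves implicit.
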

\begin{proof}
    We note that
        \begin{equation}\label{eq:p_exp}
            \begin{split}
                p(t,y) &= 
                e^{mt}\int_0^\infty \left(4\pi u\right)^{-d/2} e^{\frac{-|y|^2}{4u}-m^{2/\alpha}u} \eta(t,u)\dx[u]\\ &=
                e^{m t}\left(4\pi \right)^{-d/2} |y|^{-d/2} \int_0^\infty s^{-d/2} e^{-|y|\left(\frac{1}{4s}+m^{2/\alpha}s\right)} \eta\left(\frac{t}{|y|^{\alpha/2}},s\right)\dx[s] \\&=
                e^{m t}\left(4\pi \right)^{-d/2} |y|^{-d/2} \int_0^\infty s^{-d/2-1} e^{-|y|\left(\frac{1}{4s}+m^{2/\alpha}s\right)} \eta\left(\frac{t}{|y|^{\alpha/2}s^{\alpha/2}},1\right)\dx[s].
            \end{split}
        \end{equation}
    Let $g(s) = \eta(s^{\alpha/2},1).$ We have $\lim_{s\to 0^+} \frac{g(s)}{s^{\alpha/2}} = \mathcal{C}_{\alpha/2} > 0$
    (see \eqref{eq:limitsubLev}).
    Using Lemma \ref{lm:intasym} for $a=\frac14,$ $b=m^{2/\alpha},$ $\gamma=1+\frac{d}{2}$ and $g$ we get \eqref{eq:reldensasym}.
    \end{proof}
    \begin{proof}[Proof of Theorem \ref{th:relatproc}, Part 2.] 
    We assume here that $m=\lambda$. We have
        \begin{align*}%\label{eq:tilde_g}
            \begin{split}
            \int_{0}^{t}m e^{-m s}p(s,y)\dx[s] 
            & =
            m\left(4\pi \right)^{-d/2} |y|^{-d/2} \int_0^t \int_0^\infty u^{-d/2-1} e^{-|y|\left(\frac{1}{4u}+um^{2/\alpha}\right)} \eta\left(\frac{s}{|y|^{\alpha/2}u^{\alpha/2}},1\right)\,\dx[u]\dx[s] \\
            & =
             m\left(4\pi \right)^{-d/2} |y|^{-\frac{d-\alpha}{2}} \int_0^\infty u^{-\frac{d-\alpha+2}{2}} e^{-|y|\left(\frac{1}{4u}+um^{2/\alpha}\right)} g\left(\frac{t^{2/\alpha}}{|y|u}\right)\,\dx[u],
        \end{split}   
    \end{align*}
    where
    \[
      g(s) = \int_0^{s^{\alpha/2}} \eta(z,1)\, \dx[z],\quad s>0.
    \]
    We have
    \[
      \lim_{s\to 0^+} \frac{g(s)}{s^{\alpha}} = \frac12 \mathcal{C}_{\alpha/2}  >0,
      \quad \lim_{s\to\infty} g(s) = \int_0^\infty \eta(z,1)\, \dx[z] < \infty, 
    \]
    and using Lemma \ref{lm:intasym} we get
    \begin{equation}\label{eq:intasymptot}
      \int_0^t me^{-ms}p(s,y)\, \dx[s] \sim \left(\frac{m^{1/\alpha}}{2}\right)^{\frac{d+\alpha-1}{2}}
      \pi^{-\frac{d-1}{2}}|y|^{-\frac{d-\alpha+1}{2}}e^{-m^{1/\alpha}|y|}g\left(\frac{2t^{2/\alpha}m^{1/\alpha}}{|y|}\right).
    \end{equation}
    From Corollary \ref{reldensprop} we have
    \[
      e^{-mt}p(t,y-x) \sim \left(\frac{m^{1/\alpha}}{2\pi}\right)^{\frac{d-1}{2}} |y-x|^{-\frac{d+1}{2}} e^{-m^{1/\alpha}|y-x|} \eta\left(\dfrac{t}{|y-x|^{\alpha/2}}\sqrt{2^\alpha m},1\right),
    \]
    provided $\frac{t}{|y-x|^{\alpha/2}}\leq 2M,$
    for some $M>0.$ Hence,
    \[            
        \lim\limits_{\substack{|y|\to\infty,t\to\infty\\ t/|y|^{\alpha/2}\leq M}}
        \frac{e^{-m t}p(t,y-x)}{\displaystyle\int_0^t m e^{-m s}p(s,y)\,\dx[s]} 
        = \lim\limits_{\substack{|y|\to\infty,t\to\infty\\ t/|y|^{\alpha/2}\leq M}}
        \frac{c_1\, \eta\left(\frac{t}{|y-x|^{\alpha/2}}\sqrt{2^\alpha m},1\right)}{ \displaystyle\int_0^{\frac{t}{|y|^{\alpha/2}}\sqrt{2^\alpha m}}\eta(s,1)\, \dx[s]} \frac{e^{-m^{1/\alpha}(|y-x|-|y|)}|y-x|^{-\frac{d+1}{2}}}{|y|^{-\frac{d+1-\alpha}{2}}} = 0,
    \]
    because there exists $\delta>0$ such that $\frac12 \mathcal{C}_{\alpha/2} \leq \frac{\eta(u,1)}{u}\leq \frac32 \mathcal{C}_{\alpha/2}$ for $u\in (0,\delta)$ and the both functions: $\eta(u,1), \int_0^u \eta(s,1)\, \dx[s]$ are bounded below and above by some positive constants on $[\delta/2, 2M\sqrt{2^\alpha m}]$. 

    In the case $\frac{t}{|y|^{\alpha/2}}\geq M,$ using \eqref{eq:subord_ineq}, for
    $|y|>|x|,$ we get
    \[
      \eta\left(\frac{t}{|y-x|^{\alpha/2}s^{\alpha/2}},1\right)
      \leq c_2 \left(\frac{t}{|y-x|^{\alpha/2}s^{\alpha/2}}\right)^{-2/\alpha} \leq
      2c_2 M^{-2/\alpha} s, \quad s>0,
    \]
    and from \eqref{eq:p_exp} and Lemma \ref{lm:intasym} it follows that
    \[
      p(t,y-x)  \leq c_3 e^{mt} |y-x|^{-d/2} 
      \int_0^\infty s^{-d/2} e^{-|y-x|\left(\frac{1}{4s}+m^{2/\alpha}s\right)}\, \dx[s] 
      \leq c_4 e^{mt} |y-x|^{-\frac{d+1}{2}}e^{-m^{1/\alpha}|y-x|},
    \]
    for sufficiently large $t$ and $|y|$. We obviously have
    \[
      \int_0^{\frac{t}{|y|^{\alpha/2}}\sqrt{2^\alpha m}} \eta(s,1)\, \dx[s] \geq \int_0^{M\sqrt{2^\alpha m}}\eta(s,1)\,\dx[s] > 0,
    \]
    and this and \eqref{eq:intasymptot} yield
    \[
        \lim\limits_{\substack{|y|\to\infty,t\to\infty\\ t/|y|^{\alpha/2} > M}}
        \frac{e^{-m t}p(t,y-x)}{\displaystyle\int_0^t m e^{-m s}p(s,y)\,\dx[s]} = 0.
    \]

    Finally, in both cases, $e^{-m t} p(t,y-x)  = o\left(\int_0^t m e^{-m s} p(s,y)\dx[s]\right)$, hence
    \eqref{eq:intasymptot} yields
    \[
        p_m(t,x,y) \sim 
        \left(\frac{m^{1/\alpha}}{2}\right)^{\frac{d+\alpha-1}{2}}
      \pi^{-\frac{d-1}{2}}|y|^{-\frac{d-\alpha+1}{2}}e^{-m^{1/\alpha}|y|}
         \int_0^{\frac{t}{|y|^{\alpha/2}}\sqrt{2^\alpha m}}\eta(z,1)\, \dx[z].
    \]
    
    \end{proof}

\subsection{\texorpdfstring{$m>\lambda$}{odwrotna}}
  Let $\lambda < m.$ In this case we consider only 
       $t$ and $y$ such that $\frac{t}{|y|^{\alpha/2}} < M$
       for some constant $M>0$. In what follows we denote
       $\kappa(t,y):=\frac{t}{|y|^{\alpha/2}}\sqrt{2^\alpha m}.$

We will also often use the fact that by \eqref{eq:limitsubLev} and the continuity of $\eta(\cdot,1)$ for every $T>0$ there exists a constant
$c_0=c_0(\alpha,T)$ such that
  \begin{equation}\label{eq:basicestofeta}
      c_0^{-1} s \leq \eta(s,1) \leq c_0 s, \quad s\in (0,T).
  \end{equation}

  \begin{lemma}\label{lemma:liminfpositive}
        For all $b>a\geq0$ and $c>0$ we have
        \begin{equation}\label{eq:liminfpositive}
        \liminf\limits_{\substack{t\to\infty,\,|y|\to\infty\\ t/|y|^{\alpha/2}<M}}
             \frac{\displaystyle\int_{at}^{bt}  e^{-m s} p(s,y) \dx[s]}{\displaystyle \int_0^{ct} e^{-m s} p(s,y) \dx[s]} =
            \liminf\limits_{\substack{t\to\infty,\,|y|\to\infty\\ t/|y|^{\alpha/2}<M}}\ 
            \frac{\displaystyle \int_{a\kappa(t,y)}^{b\kappa(t,y)}\eta(z,1) \dx[z]}
            {\displaystyle \int_{0}^{c\kappa(t,y)}  \eta(z,1) \dx[z]}  > 0.
        \end{equation}
    \end{lemma}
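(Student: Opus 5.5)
Two things have to be shown: the equality of the two $\liminf$'s, and the positivity of the right-hand one. The plan is to compute $\int_{at}^{bt} e^{-ms}p(s,y)\,\dx[s]$ asymptotically, exactly as for $m=\lambda$ in the proof of Theorem~\ref{th:relatproc}, Part 2. By \eqref{eq:p_exp} and Fubini we get
\[
\int_{at}^{bt} e^{-ms}p(s,y)\,\dx[s]=(4\pi)^{-d/2}|y|^{-\frac{d-\alpha}{2}}\int_0^\infty u^{-\frac{d-\alpha+2}{2}}e^{-|y|\left(\frac{1}{4u}+um^{2/\alpha}\right)}\, g_{a,b}\!\left(\frac{t^{2/\alpha}}{|y|u}\right)\dx[u],
\]
where $g_{a,b}(s)=\int_{a s^{\alpha/2}}^{b s^{\alpha/2}}\eta(z,1)\,\dx[z]$. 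This function is positive, continuous and bounded. By \eqref{eq:limitsubLev} we have $g_{a,b}(s)/s^{\alpha}\to \frac12\mathcal{C}_{\alpha/2}(b^2-a^2)>0$ as $s\to0^+$.

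Next, apply the second part of Lemma~\ref{lm:intasym}, which needs no limit at infinity and uses $t/|y|^{\alpha/2}<M$. Take $a_{\rm L}=\frac14$, $b_{\rm L}=m^{2/\alpha}$, $\gamma=\frac{d-\alpha+2}{2}$ and $\kappa=\alpha$ in the lemma's notation. This gives
\[
\int_{at}^{bt} e^{-ms}p(s,y)\,\dx[s]\sim C\,|y|^{-\frac{d-\alpha+1}{2}}e^{-m^{1/\alpha}|y|}\,g_{a,b}\!\left(\frac{2m^{1/\alpha}t^{2/\alpha}}{|y|}\right)= C\,|y|^{-\frac{d-\alpha+1}{2}}e^{-m^{1/\alpha}|y|}\int_{a\kappa(t,y)}^{b\kappa(t,y)}\eta(z,1)\,\dx[z].
\]
Here $C$ depends only on $d,\alpha,m$, and not on $a,b$. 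The same applies to $\int_0^{ct}$ with $g_{0,c}$. The prefactors cancel in the quotient, and since the ratio of the two asymptotic equivalences tends to $1$, the two $\liminf$'s coincide. The one thing to verify is that $(2m^{1/\alpha}t^{2/\alpha}/|y|)^{\alpha/2}=\kappa(t,y)$, which is a direct computation.

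For positivity, note that $\kappa(t,y)\in(0,M\sqrt{2^\alpha m})=:(0,T)$. By \eqref{eq:basicestofeta}, $c_0^{-1}z\le\eta(z,1)\le c_0z$ on $(0,\max(b,c)T)$. Hence for every $\kappa\in(0,T)$,
\[
\frac{\int_{a\kappa}^{b\kappa}\eta(z,1)\,\dx[z]}{\int_0^{c\kappa}\eta(z,1)\,\dx[z]}\ge \frac{c_0^{-1}(b^2-a^2)\kappa^2/2}{c_0c^2\kappa^2/2}=\frac{b^2-a^2}{c_0^2c^2}>0.
\]
This bound is uniform in $\kappa$, so the $\liminf$ is positive.

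I expect the main obstacle to be the use of Lemma~\ref{lm:intasym} in the regime where $t^{2/\alpha}/|y|\to0$. There the $g$-factor degenerates, so one must check that the lemma's asymptotic is a genuine ratio statement uniform over that range. The lemma as stated gives exactly this, but it has to be invoked separately for numerator and denominator before dividing.
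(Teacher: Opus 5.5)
Your proof is correct and follows the paper's own argument. You rewrite the integral via \eqref{eq:p_exp} and Fubini with $g(s)=\int_{as^{\alpha/2}}^{bs^{\alpha/2}}\eta(z,1)\,\dx[z]$, apply Lemma~\ref{lm:intasym} to the numerator and to the denominator (the latter with $a=0$, $b=c$), and get positivity from \eqref{eq:basicestofeta}. You are also slightly more careful than the paper in pointing out that for $a>0$ one has $g(s)\to 0$ as $s\to\infty$, so only the second part of Lemma~\ref{lm:intasym} (under $t/|y|^{\alpha/2}<M$) applies.
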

    \begin{proof} Using the representation \eqref{eq:p_exp} we get
    \begin{align*}
        \int_{at}^{bt} e^{-ms}p(s,y)\, \dx[s] & =
        \left(4\pi \right)^{-d/2} |y|^{-d/2} \int_0^\infty u^{-d/2-1} e^{-|y|\left(\frac{1}{4u}+um^{2/\alpha}\right)} \int_{at}^{bt} \eta\left(\frac{s}{|y|^{\alpha/2}u^{\alpha/2}},1\right)\dx[s]\dx[u] \\
        & = \left(4\pi \right)^{-d/2} |y|^{-\frac{d-\alpha}{2}} \int_0^\infty u^{-\frac{d-\alpha+2}{2}} e^{-|y|\left(\frac{1}{4u}+um^{2/\alpha}\right)} \int_{\frac{at}{|y|^{\alpha/2 }u^{\alpha/2}}}^{\frac{bt}{|y|^{\alpha/2 }u^{\alpha/2}}} \eta(z,1)\, \dx[z]\dx[u],
    \end{align*}
    and from Lemma \ref{lm:intasym} for $g(s) = \int_{as^{\alpha/2}}^{bs^{\alpha/2}} \eta(z,1)\, \dx[z]$ we obtain
    \[
        \int_{at}^{bt} e^{-ms} p(s,y)\,\dx[s] \sim
        \frac{1}{m}\left(\frac{m^{1/\alpha}}{2}\right)^{\frac{d+\alpha-1}{2}} \pi^{-\frac{d-1}{2}}|y|^{-\frac{d-\alpha+1}{2}}
        e^{-m^{1/\alpha}|y|}\int_{a\kappa(t,y)}^{b\kappa(t,y)} \eta(z,1)\,\dx[z],
    \]
    for $\frac{t}{|y|^{2/\alpha}}<M.$ Using this formula also for $a=0$ and $b=c$, by \eqref{eq:basicestofeta}, we get
    \eqref{eq:liminfpositive}.
    
        \end{proof}

    \begin{lemma}\label{lemm:egal}
        For all $\lambda<m$ and $a\in (0,1)$ we have
        \[
        \lim\limits_{\substack{|y|\to\infty,\,t\to\infty\\ t/|y|^{\alpha/2} \leq M }} \frac{\int\limits_0^{\kappa(t,y)} e^{(m-\lambda)\frac{|y|^{\alpha/2}}{\sqrt{2^\alpha m}}s}\eta(s,1)\, \dx[s]}{\int\limits_{a\kappa(t,y)}^{\kappa(t,y)} e^{(m-\lambda)\frac{|y|^{\alpha/2}}{\sqrt{2^\alpha m}}s}\eta(s,1)\, \dx[s]} = 1.
    \]
    \end{lemma}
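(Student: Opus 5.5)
It suffices to show that the contribution of $[0,a\kappa]$ is negligible compared to that of $[a\kappa,\kappa]$, where $\kappa=\kappa(t,y)$. Write $\beta=\beta(y):=(m-\lambda)\frac{|y|^{\alpha/2}}{\sqrt{2^\alpha m}}$; then $\beta\to\infty$ as $|y|\to\infty$. Since $t/|y|^{\alpha/2}\le M$, we have $\kappa\le T:=M\sqrt{2^\alpha m}$, so \eqref{eq:basicestofeta} gives $c_0^{-1}s\le\eta(s,1)\le c_0 s$ on $(0,\kappa]$. Hence the ratio in question equals $1+I_1/I_2$, and
\[
\frac{I_1}{I_2}:=\frac{\int_0^{a\kappa}e^{\beta s}\eta(s,1)\,\dx[s]}{\int_{a\kappa}^{\kappa}e^{\beta s}\eta(s,1)\,\dx[s]}\le c_0^2\,\frac{\int_0^{a\kappa}s e^{\beta s}\,\dx[s]}{\int_{a\kappa}^{\kappa}se^{\beta s}\,\dx[s]}.
\]
It remains to show that the last quotient tends to $0$. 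The natural parameter is $\beta\kappa=(m-\lambda)t$, which tends to $\infty$ because $t\to\infty$; this is where $t\to\infty$ (and not only $|y|\to\infty$) is used.

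Substitute $s=\kappa v$. The quotient then becomes
\[
\frac{\int_0^{a}ve^{\beta\kappa v}\,\dx[v]}{\int_a^1 ve^{\beta\kappa v}\,\dx[v]}.
\]
Bound the numerator by $a e^{\beta\kappa a}$. For the denominator, restrict to $v\in[\frac{1+a}{2},1]$, which gives a lower bound of $\frac{1-a}{2}\cdot\frac{1+a}{2}\,e^{\beta\kappa(1+a)/2}$. The quotient is therefore at most
\[
\frac{4a}{1-a^2}\,e^{-\beta\kappa(1-a)/2}=\frac{4a}{1-a^2}\,e^{-(m-\lambda)t(1-a)/2}\to0 .
\]

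The only delicate point is making sure the comparison \eqref{eq:basicestofeta} applies uniformly. This requires $\kappa$ to stay in a bounded interval, which is guaranteed by the constraint $t/|y|^{\alpha/2}\le M$. The exponential gain then comes entirely from $t\to\infty$, with no further restriction on $\kappa$: $\kappa$ itself may tend to $0$ without harm, because only the product $\beta\kappa$ enters. Combining the two steps yields the limit $1$.
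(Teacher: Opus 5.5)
Your proof is correct and follows essentially the same route as the paper. Both arguments use \eqref{eq:basicestofeta} with $T=M\sqrt{2^\alpha m}$ to compare $\eta(s,1)$ with $s$, shrink the denominator to $[\frac{1+a}{2}\kappa,\kappa]$, and extract the decaying factor $e^{-(m-\lambda)(1-a)t/2}$ from $\beta\kappa=(m-\lambda)t$; the substitution $s=\kappa v$ and your cruder numerator bound are only cosmetic differences.
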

    \begin{proof}
        For every fixed $a\in (0,1)$ and
    $b=\frac12(a+1),$ using \eqref{eq:basicestofeta} with $T=M\sqrt{2^\alpha m}$ we get
    \begin{align*}
        \frac{\int\limits_0^{a\kappa(t,y)} e^{(m-\lambda)\frac{|y|^{\alpha/2}}{\sqrt{2^\alpha m}}s}\eta(s,1)\, \dx[s]}{\int\limits_{a\kappa(t,y)}^{\kappa(t,y)} e^{(m-\lambda)\frac{|y|^{\alpha/2}}{\sqrt{2^\alpha m}}s}\eta(s,1)\, \dx[s]} 
        & \leq \frac{\int\limits_0^{a\kappa(t,y)} e^{(m-\lambda)\frac{|y|^{\alpha/2}}{\sqrt{2^\alpha m}}s}\eta(s,1)\, \dx[s]}{\int\limits_{b\kappa(t,y)}^{\kappa(t,y)} e^{(m-\lambda)\frac{|y|^{\alpha/2}}{\sqrt{2^\alpha m}}s}\eta(s,1)\, \dx[s]} 
         \leq e^{(m-\lambda)(a-b)t} \frac{\int\limits_0^{a\kappa(t,y)} \eta(s,1)\, \dx[s]}{\int\limits_{b\kappa(t,y)}^{\kappa(t,y)} \eta(s,1)\, \dx[s]} \\
        & \leq e^{(m-\lambda)(a-b)t} \frac{c_0^2 \int\limits_0^{a\kappa(t,y)} s\, \dx[s]}{\int\limits_{b\kappa(t,y)}^{\kappa(t,y)} s\, \dx[s] } = e^{(m-\lambda)(a-b)t} \frac{c_0^2a^2}{1-b^2}\to 0,
    \end{align*}
    as $t\to\infty$
    and the Lemma follows.
    \end{proof}

    \begin{lemma}\label{lm:etabyinteta} For every $\lambda < m$ we have 
        \begin{equation}\label{eq:limitm}
            \lim\limits_{\substack{|y|\to\infty,\,t\to\infty\\ t/|y|^{\alpha/2}<M}}
            \frac{\eta(\kappa(t,y),1)e^{(m-\lambda)t}|y|^{-\alpha/2}}{\int\limits_0^{\kappa(t,y)} e^{(m-\lambda)\frac{|y|^{\alpha/2}}{\sqrt{2^\alpha m}}s}\eta(s,1)\, \dx[s]} = \frac{m-\lambda}{\sqrt{2^\alpha m}}.
        \end{equation}
    \end{lemma}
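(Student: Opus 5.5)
The plan is to rescale the integral in the denominator of \eqref{eq:limitm} around its upper endpoint; this turns the statement into a dominated convergence argument for a ratio of values of $\eta(\cdot,1)$. Put $\beta=\beta(y):=(m-\lambda)\frac{|y|^{\alpha/2}}{\sqrt{2^\alpha m}}$. The key identity is $\beta\kappa(t,y)=(m-\lambda)t$.

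\textbf{Step 1 (substitution).} Substitute $s=\kappa(t,y)-v/\beta$ in the denominator. This gives
\[
\int_0^{\kappa(t,y)} e^{\beta s}\eta(s,1)\,\dx[s]=\frac{e^{(m-\lambda)t}}{\beta}\int_0^{(m-\lambda)t} e^{-v}\,\eta\Big(\kappa(t,y)\Big(1-\tfrac{v}{(m-\lambda)t}\Big),1\Big)\,\dx[v].
\]
Since $|y|^{-\alpha/2}\beta=\frac{m-\lambda}{\sqrt{2^\alpha m}}$, the quotient in \eqref{eq:limitm} equals $\frac{m-\lambda}{\sqrt{2^\alpha m}}\,Q(t,y)^{-1}$, where
\[
Q(t,y)=\int_0^\infty e^{-v}\,\mathds{1}_{\{v<(m-\lambda)t\}}\,\frac{\eta\big(\kappa(t,y)(1-\frac{v}{(m-\lambda)t}),1\big)}{\eta(\kappa(t,y),1)}\,\dx[v].
\]
It therefore suffices to prove $Q(t,y)\to1$ as $t,|y|\to\infty$ with $t/|y|^{\alpha/2}<M$. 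Throughout, $\kappa(t,y)\in(0,T)$ with $T=M\sqrt{2^\alpha m}$.

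\textbf{Step 2 (domination).} By \eqref{eq:basicestofeta} with this $T$, the integrand of $Q$ is bounded by $c_0^2\,(1-\frac{v}{(m-\lambda)t})\,e^{-v}\le c_0^2e^{-v}$, which is integrable. Dominated convergence then reduces everything to the pointwise limit of the integrand for each fixed $v$.

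\textbf{Step 3 (pointwise convergence, the main obstacle).} Fix $v$ and set $u=\frac{v}{(m-\lambda)t}$, so $u\to0$ and the indicator becomes $1$. The difficulty is that $\kappa(t,y)$ is not assumed to converge; it may wander anywhere in $(0,T)$, including towards $0$. So I need the uniform statement
\[
\sup_{\kappa\in(0,T)}\Big|\frac{\eta(\kappa(1-u),1)}{\eta(\kappa,1)}-1\Big|\to0 \quad (u\to0^+).
\]
I would split the range of $\kappa$ in two.
\begin{itemize}
\item For small $\kappa$: by \eqref{eq:limitsubLev}, $\eta(s,1)/s\to\mathcal C_{\alpha/2}>0$ as $s\to0^+$. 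Given $\varepsilon>0$, choose $\delta$ with $|\eta(s,1)/(\mathcal C_{\alpha/2}s)-1|<\varepsilon$ on $(0,\delta)$. Then for $\kappa<\delta$ the ratio lies between $(1-u)\frac{1-\varepsilon}{1+\varepsilon}$ and $(1-u)\frac{1+\varepsilon}{1-\varepsilon}$.
\item For $\kappa\in[\delta,T)$ and $u<\frac12$: both arguments lie in $[\delta/2,T]$. On this interval $\eta(\cdot,1)$ is continuous, hence uniformly continuous, and bounded below by a positive constant by \eqref{eq:basicestofeta}. Hence $|\eta(\kappa(1-u),1)-\eta(\kappa,1)|\le\omega(Tu)$, where $\omega$ is a modulus of continuity, and dividing by the lower bound gives a quantity tending to $0$ with $u$.
\end{itemize}

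Combining the two ranges, the integrand tends to $e^{-v}$ for each fixed $v$. Step 2 then gives $Q(t,y)\to\int_0^\infty e^{-v}\,\dx[v]=1$, which yields \eqref{eq:limitm}.
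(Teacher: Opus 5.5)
Your proof is correct, and it is organized differently from the paper's. The paper splits the admissible $(t,y)$ into the regimes $\kappa(t,y)<\delta$ and $\kappa(t,y)\in[\delta,M\sqrt{2^\alpha m}]$ and proves matching $\limsup$/$\liminf$ bounds in each. In the first regime it replaces $\eta(s,1)$ by $(\mathcal{C}_{\alpha/2}\pm\varepsilon)s$ and evaluates $\int_0^{\kappa}e^{\beta s}s\,\dx[s]$ explicitly. In the second it uses uniform continuity to freeze $\eta(s,1)\approx\eta(\kappa(t,y),1)$ on $(a\kappa,\kappa)$, integrates the exponential explicitly, and then invokes Lemma~\ref{lemm:egal} to show that the piece over $(0,a\kappa)$ is negligible.

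Your substitution $s=\kappa-v/\beta$, with $\beta\kappa=(m-\lambda)t$, does the localization at the endpoint automatically. The weight $e^{-v}$ and the two-sided bound \eqref{eq:basicestofeta} give an integrable majorant, so you need neither the explicit integrals nor Lemma~\ref{lemm:egal}. The two regimes then enter only through one statement, uniform in $\kappa\in(0,T)$: $\eta(\kappa(1-u),1)/\eta(\kappa,1)\to 1$ as $u\to0^+$. That statement uses exactly the same two facts as the paper, namely linear behaviour of $\eta(\cdot,1)$ at $0$, and uniform continuity plus positivity away from $0$.

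Because the convergence is uniform in $\kappa$, you can apply dominated convergence along any admissible sequence $(t_n,y_n)$. This handles the joint limit directly, without the paper's final step of combining $\liminf$/$\limsup$ bounds across the two regimes. It also shows where the constant comes from: $\frac{m-\lambda}{\sqrt{2^\alpha m}}\int_0^\infty e^{-v}\,\dx[v]$. The paper's route is more computational, but it produces explicit intermediate expressions and reuses Lemma~\ref{lemm:egal}, which it needs again in the proof of Part~3 of Theorem~\ref{th:relatproc}.
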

    \begin{proof}
        For every $\varepsilon\in (0,\mathcal{C}_{\alpha/2})$ we can choose $\delta>0$ such that
        \[
            \mathcal{C}_{\alpha/2} - \varepsilon \leq \frac{\eta(s,1)}{s} \leq \mathcal{C}_{\alpha/2} + \varepsilon, \quad s\in (0,\delta].
        \] 
        and this, for $\lambda<m,$ yields 
        \begin{equation}\label{eq:limsupfirst}
        \begin{split}
           \limsup\limits_{\substack{|y|\to\infty,\,t\to\infty\\ \kappa(t,y)<\delta}}
            \frac{\eta(\kappa(t,y),1)e^{(m-\lambda)t}|y|^{-\alpha/2}}{\int\limits_0^{\kappa(t,y)} e^{(m-\lambda)\frac{|y|^{\alpha/2}}{\sqrt{2^\alpha m}}s}\eta(s,1)\, \dx[s]}
            & \leq \frac{\mathcal{C}_{\alpha/2}+\varepsilon}{\mathcal{C}_{\alpha/2}-\varepsilon} \limsup\limits_{\substack{|y|\to\infty,\,t\to\infty\\ \kappa(t,y)<\delta}}
            \frac{\kappa(t,y)e^{(m-\lambda)t}|y|^{-\alpha/2}}{\int\limits_0^{\kappa(t,y)} e^{(m-\lambda)\frac{|y|^{\alpha/2}}{\sqrt{2^\alpha m}}s}s\, \dx[s]} \\
            & =
            \frac{\mathcal{C}_{\alpha/2}+\varepsilon}{\mathcal{C}_{\alpha/2}-\varepsilon}\limsup\limits_{\substack{|y|\to\infty,\,t\to\infty\\ \kappa(t,y)<\delta}}
            \frac{e^{(m-\lambda)t}(m-\lambda)^2 t}{\sqrt{2^\alpha m}\left(e^{(m-\lambda) t}((m-\lambda)t-1) + 1\right)} \\
            & = \frac{\mathcal{C}_{\alpha/2}+\varepsilon}{\mathcal{C}_{\alpha/2}-\varepsilon} \cdot \frac{m-\lambda}{\sqrt{2^\alpha m} }
        \end{split}
        \end{equation}
    Similarly, we get
    \begin{equation}\label{eq:liminffirst}
    \begin{split}
         \liminf\limits_{\substack{|y|\to\infty,\,t\to\infty\\ \kappa(t,y)<\delta}}
            \frac{\eta(\kappa(t,y),1)e^{(m-\lambda)t}|y|^{-\alpha/2}}{\int\limits_0^{\kappa(t,y)} e^{(m-\lambda)\frac{|y|^{\alpha/2}}{\sqrt{2^\alpha m}}s}\eta(s,1)\, \dx[s]} 
            & \geq \frac{\mathcal{C}_{\alpha/2}-\varepsilon}{\mathcal{C}_{\alpha/2}+\varepsilon} \cdot \frac{m-\lambda}{\sqrt{2^\alpha m} }.
    \end{split}
    \end{equation}
    
    Let $c_1 := \inf\limits_{s\in [\frac12 \delta,M\sqrt{2^\alpha m}]}\eta(s,1).$ The function $\eta(\cdot,1)$ is uniformly continuous in $[\frac12\delta,M\sqrt{2^\alpha m}],$ so we can choose $a\in (\frac12,1)$ such that $|\eta(s,1) - \eta(\kappa(t,y),1)| < \varepsilon c_1,$ provided $s\in (a\kappa(t,y),\kappa(t,y)) \subset [\frac12 \delta,M\sqrt{2^\alpha m}],$ and we get
    \begin{equation}\label{eq:limsupsecond}
    \begin{split}
        & \limsup\limits_{\substack{|y|\to\infty,\,t\to\infty\\ \kappa(t,y)\in[\delta,M\sqrt{2^\alpha m}]}}
            \frac{\eta(\kappa(t,y),1)e^{(m-\lambda)t}|y|^{-\alpha/2}}{\int\limits_{a\kappa(t,y)}^{\kappa(t,y)} e^{(m-\lambda)\frac{|y|^{\alpha/2}}{\sqrt{2^\alpha m}}s}\eta(s,1)\, \dx[s]} \\& 
            \leq  \limsup\limits_{\substack{|y|\to\infty,\,t\to\infty\\ \kappa(t,y)\in[\delta,M\sqrt{2^\alpha m}]}}
            \frac{\eta(\kappa(t,y),1)e^{(m-\lambda)t}|y|^{-\alpha/2}}{(\eta(\kappa(t,y),1)-\varepsilon c_1)\int\limits_{a\kappa(t,y)}^{\kappa(t,y)} e^{(m-\lambda)\frac{|y|^{\alpha/2}}{\sqrt{2^\alpha m}}s}\, \dx[s]} \\
            & = \frac{m-\lambda}{\sqrt{2^\alpha m}}\limsup\limits_{\substack{|y|\to\infty,\,t\to\infty\\ \kappa(t,y)\in[\delta,M\sqrt{2^\alpha m}]}}
            \frac{\eta(\kappa(t,y),1)}{(\eta(\kappa(t,y),1)-\varepsilon c_1)(1-e^{(m-\lambda)(a-1)t})} \\
            & = \frac{m-\lambda}{\sqrt{2^\alpha m}}\limsup\limits_{\substack{|y|\to\infty,\,t\to\infty\\ \kappa(t,y)\in[\delta,M\sqrt{2^\alpha m}]}} \left(1-\frac{\varepsilon c_1}{\eta(\kappa(t,y),1)}\right)^{-1}
            \leq \frac{m-\lambda}{\sqrt{2^\alpha m}} \left(1-\varepsilon\right)^{-1}.
        \end{split}
        \end{equation}
        Similarly, we obtain
        \begin{equation}\label{eq:liminfsecond}
            \liminf\limits_{\substack{|y|\to\infty,\,t\to\infty\\ \kappa(t,y)\in[\delta,M\sqrt{2^\alpha m}]}}
            \frac{\eta(\kappa(t,y),1)e^{(m-\lambda)t}|y|^{-\alpha/2}}{\int\limits_{a\kappa(t,y)}^{\kappa(t,y)} e^{(m-\lambda)\frac{|y|^{\alpha/2}}{\sqrt{2^\alpha m}}s}\eta(s,1)\, \dx[s]} 
            \geq  \frac{m-\lambda}{\sqrt{2^\alpha m}} \left(1+\varepsilon\right)^{-1},
        \end{equation}
        and finally, from \eqref{eq:limsupfirst}, \eqref{eq:liminffirst}, \eqref{eq:limsupsecond}, \eqref{eq:liminfsecond} 
        and Lemma \ref{lemm:egal} we have
        \begin{align*}
            & \frac{m-\lambda}{\sqrt{2^\alpha m}} \min\left\{\frac{\mathcal{C}_{\alpha/2}-\varepsilon}{\mathcal{C}_{\alpha/2}+\varepsilon},(1+\varepsilon)^{-1}\right\}  \leq 
            \liminf\limits_{\substack{|y|\to\infty,\,t\to\infty\\ \kappa(t,y)\leq M\sqrt{2^\alpha m}}}
            \frac{\eta(\kappa(t,y),1)e^{(m-\lambda)t}|y|^{-\alpha/2}}{\int\limits_{0}^{\kappa(t,y)} e^{(m-\lambda)\frac{|y|^{\alpha/2}}{\sqrt{2^\alpha m}}s}\eta(s,1)\, \dx[s]} \\
            & \limsup\limits_{\substack{|y|\to\infty,\,t\to\infty\\ \kappa(t,y)\leq M\sqrt{2^\alpha m}}}
            \frac{\eta(\kappa(t,y),1)e^{(m-\lambda)t}|y|^{-\alpha/2}}{\int\limits_{0}^{\kappa(t,y)} e^{(m-\lambda)\frac{|y|^{\alpha/2}}{\sqrt{2^\alpha m}}s}\eta(s,1)\, \dx[s]} \leq 
            \frac{m-\lambda}{\sqrt{2^\alpha m}} \max\left\{\frac{\mathcal{C}_{\alpha/2}+\varepsilon}{\mathcal{C}_{\alpha/2}-\varepsilon},(1-\varepsilon)^{-1}\right\},            
        \end{align*}
        for every $\varepsilon\in (0,\mathcal{C}_{\alpha/2}),$ which yields \eqref{eq:limitm}.
    \end{proof}

   \begin{proof}[Proof of Theorem \ref{th:relatproc}, Part 3.]
       It follows from Lemma \ref{lemma:liminfpositive} that
       for all $1 > b > a > 0$ and $t>0$ we have
       \begin{align*}
          \liminf\limits_{\substack{|y|\to\infty,\,t\to\infty\\ t/|y|^{\alpha/2}<M}}  \frac{\int\limits_{0}^{t} e^{-\lambda s}p(s,y)\,\dx[s]}{\int\limits_0^{at} e^{-\lambda s}p(s,y)\,\dx[s]}
           & = \liminf\limits_{\substack{|y|\to\infty,\,t\to\infty\\ t/|y|^{\alpha/2}<M}} \frac{\int\limits_{0}^{t} e^{(m-\lambda) s} e^{-ms}p(s,y)\,\dx[s]}{\int\limits_0^{at} e^{(m-\lambda) s} e^{-ms} p(s,y)\,\dx[s]} \\
         & \geq  \liminf\limits_{\substack{|y|\to\infty,\,t\to\infty\\ t/|y|^{\alpha/2}<M}} e^{(m-\lambda)(b-a)t} \frac{\int\limits_{bt}^{t}e^{-ms}p(s,y)\,\dx[s]}{\int\limits_0^{at}  e^{-ms} p(s,y)\,\dx[s]} = \infty.
       \end{align*}
       This yields that for every fixed $a\in (0,1)$ we have
       \begin{equation}\label{eq:thesameasympt}
           \int_0^t e^{-\lambda s}p(s,y)\,\dx[s] \sim \int_{at}^t e^{-\lambda s} p(s,y)\,\dx[s].
       \end{equation}
       
       By substitutions $z=\frac{2m^{1/\alpha}}{|y|}u$, $w=\frac{1}{\sqrt{z}}-\sqrt{z}$, $s=\frac{\sqrt{2^{\alpha}m}}{|y|^{\alpha/2}}\tau$ and the homogeneity of $\eta$ we obtain
    \begin{equation}\label{eq:substitution}
        \begin{split}
            \int_{at}^t \lambda e^{-\lambda \tau} p(\tau,y)\,\dx[\tau] &=
           \frac{\lambda}{\left(4\pi\right)^{d/2}}\int_{at}^t e^{(m-\lambda) \tau} \int_0^\infty u^{-d/2} e^{-\left(\frac{|y|^2}{4u}+m^{2/\alpha}u\right)}\eta(\tau,u)\, \dx[u]\dx[\tau]\\&=
            \frac{\lambda}{(4\pi)^{d/2}} \left(\frac{|y|}{2m^{1/\alpha}}\right)^{-\frac{d-\alpha}{2}} e^{-m^{1/\alpha}|y|}
            \int_{a\kappa(t,y)}^{\kappa(t,y)} e^{(m-\lambda)\frac{|y|^{\alpha/2}}{\sqrt{2^\alpha m}}s} \int_\mathbb{R} h_s(w,|y|)\, \dx[w]\dx[s],
        \end{split}
    \end{equation}
    where
    \begin{itemize}
        \item $h_s(w,|y|)= z(w)^{-d/2} e^{-\frac{1}{2}m^{1/\alpha}|y|w^2} \eta\left(s,z(w)\right) \left(-z^\prime(w)\right)$,
        \item $z(w) = 1+\frac{w^2}{2}-\frac{w}{2}\sqrt{w^2+4}$,
        \item $z^\prime(w) = \frac{d}{dw} z(w) = w-\frac{w^2+2}{\sqrt{w^2+4}}$.
    \end{itemize}
    Note that $z^\prime(w)<0$, for every $w\in\mathbb{R}$. 
   
    Let $\varepsilon>0$ and $s\in (a\kappa(t,y),\kappa(t,y)).$ Then we have 
    \begin{align*}
            \int\limits_{|w|>\varepsilon} h_s(w,|y|) \dx[w]&\leq
            e^{-\frac{1}{2}m^{1/\alpha}|y|\varepsilon^2}\int\limits_0^\infty z^{-d/2}\eta(s,z)\dx[z]  =
            e^{-\frac{1}{2}m^{1/\alpha}|y|\varepsilon^2} s^{-d/\alpha}\int\limits_0^\infty z^{-d/2} \eta(1,z)\,\dx[z] \\
            & \leq e^{-\frac{1}{2}m^{1/\alpha}|y|\varepsilon^2} (a\kappa(t,y))^{-d/\alpha}\int\limits_0^\infty z^{-d/2} \eta(1,z)\,\dx[z]  = c_1 e^{-\frac{1}{2}m^{1/\alpha}|y|\varepsilon^2} (a\kappa(t,y))^{-d/\alpha} 
    \end{align*}
    for some constant $c_1 >0$ (the finiteness of $c_1=\int_0^\infty z^{-d/2} \eta(1,z)\,\dx[z]$ follows, e.g., from \cite[Lemma 1]{Hawkes1971}, see also \cite{GrzywnyLezajTrojan2023}).

    Moreover, we can choose $\varepsilon>0$ such that $z(w)\in\left[\frac{1}{2}, \frac{3}{2}\right]$ and $-z^\prime(w)\in\left[\frac{1}{2}, \frac{3}{2}\right]$ for all $|w|<\varepsilon$ and this yields
    \begin{align*}
            \int\limits_{|w|<\varepsilon} h_s(w,|y|) \dx[w] &=
            \int\limits_{|w|<\varepsilon} \left(z(w)\right)^{-d/2} e^{-\frac{1}{2} m^{1/\alpha} |y| w^2} \eta(s, z(w)) (-z^\prime(w))\,\dx[w] \\&\geq
            \frac12 \left(\frac32\right)^{-d/2} e^{-\frac{1}{2} m^{1/\alpha}|y| \frac{\varepsilon^2}{4}} \int\limits_{|w|<\frac{\varepsilon}{2}} \eta\left(s, z(w)\right) \dx[w] \\
            & = \frac12 \left(\frac32\right)^{-d/2} e^{-\frac{1}{8} m^{1/\alpha}|y| \varepsilon^2}\int\limits_{|w|<\frac{\varepsilon}{2}} \frac{\eta\left(sz(w)^{-\alpha/2}, 1\right)}{z(w)} \dx[w] \\
            & \geq \frac12 \left(\frac32\right)^{-d/2} e^{-\frac{1}{8} m^{1/\alpha}|y| \varepsilon^2} \int\limits_{|w|<\frac{\varepsilon}{2}} c_0^{-1} sz(w)^{-\alpha/2-1}\, \dx[w] \\
            & \geq c_2 \varepsilon e^{-\frac{1}{8} m^{1/\alpha}|y| \varepsilon^2} a\kappa(t,x),
    \end{align*}
    where we use \eqref{eq:basicestofeta} with $T=\left(\frac12\right)^{-\alpha/2}M\sqrt{2^\alpha m}=2^\alpha M\sqrt{m}.$

    In consequence
    \begin{align*}
        \lim\limits_{\substack{|y|\to\infty,t\to\infty \\t/|y|^{\alpha/2}<M}}
        \frac{\displaystyle\int\limits_{|w|>\varepsilon} h_s(w,|y|) \dx[w]}{\displaystyle \int\limits_{|w|<\varepsilon} h_s(w,|y|) \dx[w]} & \leq
        \lim\limits_{\substack{|y|\to\infty,t\to\infty \\ t/|y|^{\alpha/2}<M}}
        \frac{c_1 a^{-d/\alpha} e^{-\frac12 m^{1/\alpha} |y|\varepsilon^2}\kappa(t,y)^{-d/\alpha}}{c_2 a \varepsilon e^{-\frac18 m^{1/\alpha} |y|\varepsilon^2}\kappa(t,y)} \\
        & = 
        \lim\limits_{\substack{|y|\to\infty,\, t\to\infty \\ t/|y|^{\alpha/2}<M}}
        c_3a^{-d/\alpha-1} \varepsilon^{-1} e^{-\frac{3}{8}m^{1/\alpha}|y|\varepsilon^2} |y|^{\frac{d+\alpha}{2}} t^{-\frac{d+\alpha}{\alpha}} =
        0,
    \end{align*}
    and since the above convergence is uniform with respect to $s\in (a\kappa(t,y),\kappa(t,y)),$ this yields
     \begin{equation}\label{eq:changetoepsilon}
        \int_{a\kappa(t,y)}^{\kappa(t,y)} e^{(m-\lambda)\frac{|y|^{\alpha/2}}{\sqrt{2^\alpha m}}s} \int_\mathbb{R} h_s(w,|y|) \,\dx[w]\dx[s] \sim
        \int_{a\kappa(t,y)}^{\kappa(t,y)} e^{(m-\lambda)\frac{|y|^{\alpha/2}}{\sqrt{2^\alpha m}}s} \int_{|w|<\varepsilon} h_s(w,|y|) \,\dx[w]\dx[s].
    \end{equation}
    If $|w|<\varepsilon$ and $z(w)\in\left(\frac12, \frac32\right)$, then for every $\varepsilon_1>0$ 
    we can choose $\delta>0$, such that
    \[
            \frac{\eta(s,z(w))}{\eta(s,1)} =
            z\left(w\right)^{-\alpha/2-1}
            \frac{
                \eta\left(\dfrac{s}{z(w)^{\alpha/2}},1\right)
                }{
                \dfrac{s}{z(w)^{\alpha/2}}
                }
            \frac{
                s
                }{
                \eta\left(s,1\right)
                } \leq 
            z\left(w\right)^{-\alpha/2-1} \frac{\mathcal{C}_{\alpha/2}+\varepsilon_1}{\mathcal{C}_{\alpha/2}-\varepsilon_1},
   \]
    provided $s < \delta.$ Similarly, we get also the corresponding lower bound.
    Furthermore, the function $(s,z)\mapsto\frac{\eta(s,z)}{\eta(s,1)}$ is uniformly continuous on $[\delta,M\sqrt{2^\alpha m}]\times[\frac12,\frac32]$ and taking $\varepsilon$ sufficiently small, we get 
    \[         
          1-\varepsilon_1 \leq \frac{\eta(s,z(w))}{\eta(s,1)} \leq 1+\varepsilon_1, \quad s\in [\delta, M\sqrt{2^\alpha m}], |w|<\varepsilon.  
    \] 
    Therefore, we obtain
    \[
         j_1(\varepsilon_1,w) \leq \frac{\eta(s,z(w))}{\eta(s,1)} \leq j_2(\varepsilon_1,w), \quad s\in (0,M\sqrt{2^\alpha m}], |w|<\varepsilon, 
    \]
    where
    \[
        j_1(\varepsilon_1,w) = \min\{z(w)^{-\alpha/2-1}\frac{\mathcal{C}_{\alpha/2}-\varepsilon_1}{\mathcal{C}_{\alpha/2}+\varepsilon_1},1-\varepsilon_1\},
        \quad j_2(\varepsilon_1,w) = \max\{z(w)^{-\alpha/2-1}\frac{\mathcal{C}_{\alpha/2}+\varepsilon_1}{\mathcal{C}_{\alpha/2}-\varepsilon_1},1+\varepsilon_1\}.
    \]
    We obtain
    \begin{equation}\label{eq:intgeqnupm}
         \frac{\int\limits_{|w|<\varepsilon} h_s(w,|y|)\,  \dx[w]}{\eta(s,1)} \leq
        \int\limits_{|w|<\varepsilon} j_2(\varepsilon_1,w) z(w)^{-\frac{d}{2}} e^{-\frac{1}{2}m^{1/\alpha}|y|w^2}\left(-z^\prime(w)\right)\dx[w].
    \end{equation}
    Using the Laplace's method we get
    \[
       \lim_{\substack{|y|\to\infty\\ t\to\infty }} \frac{\int\limits_{|w|<\varepsilon} j_2(\varepsilon_1,w) z(w)^{-\frac{d}{2}} e^{-\frac{1}{2}m^{1/\alpha}|y|w^2}\left(-z^\prime(w)\right)\dx[w]}{\sqrt{\frac{2\pi}{|y|m^{1/\alpha}}}} = 
       \max\left\{\frac{\mathcal{C}_{\alpha/2}+\varepsilon_1}{\mathcal{C}_{\alpha/2}-\varepsilon_1},1+\varepsilon_1\right\}
    \]
    and this, \eqref{eq:changetoepsilon}, \eqref{eq:substitution}, \eqref{eq:thesameasympt}, \eqref{eq:intgeqnupm} and
    Lemma \ref{lemm:egal} yield
    \begin{equation*}\label{eq:ineq_relat_U}
        \limsup_{\substack{|y|\to\infty, t\to\infty\\ t/|y|^{\alpha/2}<M}}
        \frac{\displaystyle \int_{0}^{\kappa(t,y)} e^{(m-\lambda)\frac{|y|^{\alpha/2}}{\sqrt{2^\alpha m}}s} \int_{\R} h_s(w,|y|) \dx[w]\dx[s]}
        {\displaystyle \int_{0}^{\kappa(t,y)} e^{(m-\lambda)\frac{|y|^{\alpha/2}}{\sqrt{2^\alpha m}}s} \eta(s,1) \dx[s]\sqrt{\dfrac{2\pi}{|y|m^{1/\alpha}}}} \leq 
        \max\left\{\frac{\mathcal{C}_{\alpha/2}+\varepsilon_1}{\mathcal{C}_{\alpha/2}-\varepsilon_1},1+\varepsilon_1\right\}.
    \end{equation*}   
    Similarly, we get
    \begin{equation*}\label{eq:ineq_relat_L}
        \liminf_{\substack{|y|\to\infty, t\to\infty\\ t/|y|^{\alpha/2}<M }}
        \frac{\displaystyle \int_{0}^{\kappa(t,y)} e^{(m-\lambda)\frac{|y|^{\alpha/2}}{\sqrt{2^\alpha m}}s} \int_{\R} h_s(w,|y|) \dx[w]\dx[s]}
        {\displaystyle \int_{0}^{\kappa(t,y)} e^{(m-\lambda)\frac{|y|^{\alpha/2}}{\sqrt{2^\alpha m}}s} \eta(s,1) \dx[s]\sqrt{\dfrac{2\pi}{|y|m^{1/\alpha}}}} \geq 
        \min\left\{\frac{\mathcal{C}_{\alpha/2}-\varepsilon_1}{\mathcal{C}_{\alpha/2}+\varepsilon_1},1-\varepsilon_1\right\},
    \end{equation*}
    and since the both above inequalities hold for all $\varepsilon_1 < \min\{1,\mathcal{C}_{\alpha/2}\},$ we have
    \[
        \lim_{\substack{|y|\to\infty, t\to\infty\\ t/|y|^{\alpha/2}<M }}
        \frac{\displaystyle \int_{0}^{\kappa(t,y)} e^{(m-\lambda)\frac{|y|^{\alpha/2}}{\sqrt{2^\alpha m}}s} \int_{\R} h_s(w,|y|)\, \dx[w]\dx[s]}
        {\displaystyle \int_{0}^{\kappa(t,y)} e^{(m-\lambda)\frac{|y|^{\alpha/2}}{\sqrt{2^\alpha m}}s} \eta(s,1) \,\dx[s]\,\sqrt{\dfrac{2\pi}{|y|m^{1/\alpha}}}} = 1.
    \]
    Finally, we have
    \begin{equation}\label{eq:truncrespart}
        \int_0^t \lambda e^{-\lambda s} p(s,y)\, \dx[s] \sim 
        \frac{\lambda m^{\frac{d-\alpha-1}{2\alpha}}e^{-m^{1/\alpha}|y|}}{2^{\frac{d+\alpha-1}{2}}\pi^{\frac{d-1}{2}}|y|^{\frac{d-\alpha+1}{2}}}
        \int_0^{\frac{\sqrt{2^\alpha m}}{|y|^{\alpha/2}} t} e^{(m-\lambda)\frac{|y|^{\alpha/2}}{\sqrt{2^\alpha m}}s} \eta(s,1)\, \dx[s].
    \end{equation}
    In order to find the asymptotic behavior of $p_\lambda(t,x,y)$, we consider also $e^{-\lambda t}p(t,x,y)$. 
    Using Corollary \ref{reldensprop} and Lemma \ref{lm:etabyinteta} we obtain
    \begin{align*}
        & \lim\limits_{\substack{|y|\to\infty,\, t\to\infty\\t/|y|^{\alpha/2}<M}}
            \frac{e^{-\lambda t} p(t,y-x) }{\displaystyle  |y|^{-\frac{d-\alpha+1}{2}} e^{-m^{1/\alpha}(|y|-\langle \frac{y}{|y|},x\rangle)} \frac{\eta(\kappa(t,y-x),1)}{\eta(\kappa(t,y),1)}\int_0^{\kappa(t,y)} e^{(m-\lambda)\frac{|y|^{\alpha/2}}{\sqrt{2^\alpha m}}s} \eta(s,1) \,\dx[s]} \\
            & =
            \lim\limits_{\substack{|y|\to\infty,\, t\to\infty\\t/|y|^{\alpha/2}<M}}
            \frac{e^{(m-\lambda) t} \left(\frac{m^{1/\alpha}}{2\pi}\right)^\frac{d-1}{2} e^{m^{1/\alpha}(|y|-|y-x|-\langle \frac{y}{|y|},x\rangle)}|y-x|^{-\frac{d+1}{2}}|y|^{\frac{d-\alpha+1}{2}}\eta(\kappa(t,y),1)}{\displaystyle   \int_0^{\kappa(t,y)} e^{(m-\lambda)\frac{|y|^{\alpha/2}}{\sqrt{2^\alpha m}}s} \eta(s,1) \,\dx[s]} \\
            & = \left(\frac{m^{1/\alpha}}{2\pi}\right)^\frac{d-1}{2} \frac{m-\lambda}{\sqrt{2^\alpha m}},
    \end{align*}
    and the theorem follows by this and \eqref{eq:truncrespart}.  
    \end{proof}

%\bibliographystyle{abbrv}
%\bibliography{bibliografia.bib} 

\begin{thebibliography}{10}

\bibitem{BlumenthalGetoor1960}
R.~M. Blumenthal and R.~K. Getoor.
\newblock Some theorems on stable processes.
\newblock {\em Trans. Amer. Math. Soc.}, 95:263--273, 1960.

\bibitem{Bogdan}
K.~Bogdan, T.~Grzywny, and M.~Ryznar.
\newblock Density and tails of unimodal convolution semigroups.
\newblock {\em J. Funct. Anal.}, 266(6):3543--3571, 2014.

\bibitem{Carmona1989}
R.~Carmona.
\newblock Path integrals for relativistic {S}chr\"odinger operators.
\newblock In {\em Schr\"odinger operators ({S}\o nderborg, 1988)}, volume 345
  of {\em Lecture Notes in Phys.}, pages 65--92. Springer, Berlin, 1989.

\bibitem{Cygan}
W.~Cygan, T.~Grzywny, and B.~Trojan.
\newblock Asymptotic behavior of densities of unimodal convolution semigroups.
\newblock {\em Trans. Amer. Math. Soc.}, 369(8):5623--5644, 2017.

\bibitem{dibello2025}
C.~Di~Bello, A.~Chechkin, T.~Grzywny, Z.~Palmowski, K.~Szczypkowski, and
  B.~Trojan.
\newblock Partial versus total resetting for {L}\'evy flights in {$d$}
  dimensions: similarities and discrepancies.
\newblock {\em Chaos}, 35(4):Paper No. 043129, 9, 2025.

\bibitem{Dumas2002}
V.~Dumas, F.~Guillemin, and P.~Robert.
\newblock A {M}arkovian analysis of additive-increase multiplicative-decrease
  algorithms.
\newblock {\em Adv. in Appl. Probab.}, 34(1):85--111, 2002.

\bibitem{EvansMajumdar2011}
M.~R. Evans and S.~N. Majumdar.
\newblock Diffusion with stochastic resetting.
\newblock {\em Phys. Rev. Lett.}, 106:160601, Apr 2011.

\bibitem{Evans2020}
M.~R. Evans, S.~N. Majumdar, and G.~Schehr.
\newblock Stochastic resetting and applications.
\newblock {\em Journal of Physics A: Mathematical and Theoretical},
  53(19):193001, Apr 2020.

\bibitem{Froehlich2007}
J.~Fr\"ohlich, B.~L.~G. Jonsson, and E.~Lenzmann.
\newblock Boson stars as solitary waves.
\newblock {\em Comm. Math. Phys.}, 274(1):1--30, 2007.

\bibitem{GrzywnyLezajTrojan2023}
T.~Grzywny, {\L}.~Le\.zaj, and B.~Trojan.
\newblock Transition densities of subordinators of positive order.
\newblock {\em J. Inst. Math. Jussieu}, 22(3):1119--1179, 2023.

\bibitem{Grzywny2024}
T.~Grzywny, Z.~Palmowski, K.~Szczypkowski, and B.~Trojan.
\newblock Stationary states for stable processes with partial resetting.
\newblock {\em Ann. Appl. Probab.}, 36(2):1110--1177, 2026.

\bibitem{Guillemin2002}
F.~Guillemin, P.~Robert, and A.~Zwart.
\newblock {AIMD algorithms and exponential functionals}.
\newblock Research Report RR-4447, {INRIA}, 2002.

\bibitem{Gupta2022}
S.~Gupta and A.~M. Jayannavar.
\newblock Stochastic resetting: A (very) brief review.
\newblock {\em Frontiers in Physics}, 10, 2022.

\bibitem{Hawkes1971}
J.~Hawkes.
\newblock A lower {L}ipschitz condition for the stable subordinator.
\newblock {\em Z. Wahrscheinlichkeitstheorie und Verw. Gebiete}, 17:23--32,
  1971.

\bibitem{Herr2014}
S.~Herr and E.~Lenzmann.
\newblock The {B}oson star equation with initial data of low regularity.
\newblock {\em Nonlinear Anal.}, 97:125--137, 2014.

\bibitem{KaletaSchillingSztonyk2025}
K.~Kaleta, R.~L. Schilling, and P.~Sztonyk.
\newblock Decay of resolvent kernels and {S}chr\"odinger eigenstates for
  {L}\'evy operators.
\newblock {\em Math. Ann.}, 394(4):Paper No. 88, 36, 2026.

\bibitem{KaletaSztonyk2019}
K.~Kaleta and P.~Sztonyk.
\newblock Spatial asymptotics at infinity for heat kernels of
  integro-differential operators.
\newblock {\em Trans. Amer. Math. Soc.}, 371(9):6627--6663, 2019.

\bibitem{kolb}
M.~Kolb and A.~W\"ubker.
\newblock Brownian motion with partial resetting conditioned to stay positive.
\newblock {\em Bull. Lond. Math. Soc.}, 58(3):Paper No. e70314, 24, 2026.

\bibitem{Kou2002}
S.~G. Kou.
\newblock A jump-diffusion model for option pricing.
\newblock {\em Management Science}, 48(8):1086--1101, 2002.

\bibitem{Lieb2010}
E.~H. Lieb and R.~Seiringer.
\newblock {\em The stability of matter in quantum mechanics}.
\newblock Cambridge University Press, Cambridge, 2010.

\bibitem{Mendoza2010}
E.~G. Mendoza.
\newblock Sudden stops, financial crises, and leverage.
\newblock {\em American Economic Review}, 100(5):1941–66, December 2010.

\bibitem{MERTON1976}
R.~C. Merton.
\newblock Option pricing when underlying stock returns are discontinuous.
\newblock {\em Journal of Financial Economics}, 3(1):125--144, 1976.

\bibitem{MontanariZecchina2002}
A.~Montanari and R.~Zecchina.
\newblock Optimizing searches via rare events.
\newblock {\em Phys. Rev. Lett.}, 88:178701, Apr 2002.

\bibitem{Nemes2013}
G.~Nemes.
\newblock An explicit formula for the coefficients in {L}aplace's method.
\newblock {\em Constr. Approx.}, 38(3):471--487, 2013.

\bibitem{NIST}
F.~W.~J. Olver et~al. (eds.).
\newblock {\it NIST Digital Library of Mathematical Functions}.
\newblock http://dlmf.nist.gov/, Release 1.1.2 of 2021-06-15.

\bibitem{RoldanLisicaGrill2016}
E.~Rold\'an, A.~Lisica, D.~S\'anchez-Taltavull, and S.~W. Grill.
\newblock Stochastic resetting in backtrack recovery by rna polymerases.
\newblock {\em Phys. Rev. E}, 93:062411, Jun 2016.

\bibitem{Ryznar2002}
M.~Ryznar.
\newblock Estimates of {G}reen function for relativistic {$\alpha$}-stable
  process.
\newblock {\em Potential Anal.}, 17(1):1--23, 2002.

\bibitem{Sato68}
K.~Sato.
\newblock {\em L\'evy processes and infinitely divisible distributions.}
  Cambridge Studies in Advanced Mathematics, vol. 68.
\newblock Cambridge University Press, Cambridge, revised edition, 2013.

\bibitem{SchillingPartzsch2014}
R.~L. Schilling and L.~Partzsch.
\newblock {\em Brownian motion}.
\newblock De Gruyter Graduate. De Gruyter, Berlin, second edition, 2014.

\bibitem{Zolotariev1999}
V.~V. Uchaikin and V.~M. Zolotarev.
\newblock {\em Chance and stability}.
\newblock Modern Probability and Statistics. VSP, Utrecht, 1999.

\bibitem{watanabe}
T.~Watanabe.
\newblock Asymptotic estimates of multi-dimensional stable densities and their
  applications.
\newblock {\em Trans. Amer. Math. Soc.}, 359(6):2851--2879, 2007.

\end{thebibliography}

\bibliographystyle{abbrv}

\end{document}